\newtheorem{theorem}{Theorem}[section]
\newtheorem{cor}[theorem]{Corollary}
\newtheorem{lemma}[theorem]{Lemma}
\newtheorem{prop}[theorem]{Proposition}
\newtheorem{remark}[theorem]{Remark}
\newtheorem{lem}[theorem]{Lemma}
\def\C{\mathbb {C}}
\def\cD{\mathcal D}
\def\cB{\mathcal B}
\def\IHA{I_H\otimes A}
\def\sW{\mathscr W}
\def\tR{\widetilde{R}}
\def\cW{\mathcal W}
\def\cR{\mathcal R}
\def\cA{\mathcal A}
\def\La{\Lambda}
\def\cG{\mathcal{G}}
\def\wtC{\widetilde{C}}
\def\JJ{J}
\def\GG{f}
\def\interior{\operatorname{int}}
\def\cU{\mathcal{U}}
\def\cV{\mathcal{V}}
\def\rr{q}
\def\pp{p}
\def\qq{q}
\newcommand{\FG}{G}
\newcommand{\spann}{\operatorname{span}}
\newcommand{\SigmaT}{T}
\DeclareMathOperator{\rg}{rg}
\numberwithin{equation}{section}
\newcommand{\df}[1]{{\it{#1}}{\index{#1}}}
\title[Mapping spectraballs to free spectrahedra]{Free bianalytic maps between spectrahedra and spectraballs in a generic setting}
\author[M.L. Augat]{Meric Augat}
\address{Meric Augat, Department of Mathematics\\
  University of Florida\\ Gainesville 
   }
   \email{mlaugat@math.ufl.edu}
\author[J.W. Helton]{J. William Helton${}^1$}
\address{J. William Helton, Department of Mathematics\\
  University of California \\
  San Diego}
\email{helton@math.ucsd.edu}
\thanks{${}^1$Research supported by the NSF grant
DMS-1500835.}
\author[I. Klep]{Igor Klep${}^{2}$}
\address{Igor Klep, Department of Mathematics, 
The University of Auckland, New Zealand}
\email{igor.klep@auckland.ac.nz}
\thanks{${}^2$Supported by the Marsden Fund Council of the Royal Society of New Zealand. Partially supported by the Slovenian Research Agency grants J1-8132, N1-0057 and P1-0222.}
\author[S. McCullough]{Scott McCullough${}^3$}
\address{Scott McCullough, Department of Mathematics\\
  University of Florida\\ Gainesville 
   }
   \email{sam@math.ufl.edu}
\thanks{${}^3$Research supported by NSF grants DMS-1361501 and DMS-1764231} 
\dedicatory{This paper, which would not exist without techniques he pioneered,
 is dedicated to Joe Ball on the occasion of his 70th birthday.}
\date{\today}
\subjclass[2010]{47L25, 32H02, 13J30    (Primary); 14P10, 52A05, 46L07 (Secondary)}
\keywords{bianalytic map, birational map, 
 linear matrix inequality (LMI),
spectrahedron, convex set, Positivstellensatz, free analysis, real algebraic geometry}
\begin{document}


\numberwithin{equation}{section}

\begin{abstract}
Given a tuple  $E=(E_1,\dots,E_g)$ of $d\times d$ matrices, the collection $\cB_E$ of those tuples of matrices  $X=(X_1,\dots,X_g)$ (of the same size)
such that $\| \sum E_j\otimes X_j\|\le 1$ is a spectraball. Likewise, given a tuple $B=(B_1,\dots,B_g)$ of $e\times e$ matrices
the collection $\cD_B$ of tuples of matrices $X=(X_1,\dots,X_g)$ (of the same size) such that  $I + \sum B_j\otimes X_j +\sum B_j^* \otimes X_j^*\succeq 0$
is a free spectrahedron. Assuming $E$ and $B$ are irreducible, plus an additional mild hypothesis, there is a free bianalytic map $p:\cB_E\to \cD_B$
normalized by $p(0)=0$ and $p^\prime(0)=I$ if and only if $\cB_E=\cB_B$ and $B$ spans an algebra. Moreover $p$ is unique, rational and has an elegant algebraic
representation. 
\end{abstract} 

\maketitle

\setcounter{tocdepth}{3}
\contentsmargin{2.55em} 
\dottedcontents{section}[3.8em]{}{2.3em}{.4pc} 
\dottedcontents{subsection}[6.1em]{}{3.2em}{.4pc}
\dottedcontents{subsubsection}[8.4em]{}{4.1em}{.4pc}

\section{Introduction}
In this article we continue our investigation of free bianalytic mappings 
between matrix convex domains. The results in this article stand on the bedrock of
the noncommutative state space methods
introduced to the operator theory community by Joe and his collaborators and they are inseparable
from the profound influence of
Joe's work in function theoretic operator theory and free analysis.

 Fix $g$ a positive integer. Given
a positive integer $n$, let $M_n(\C)^g$ denote the $g$-tuples 
 $X=(X_1,\dots,X_g)$ of $n\times n$ matrices with entries from $\C$.
Given $A\in M_d(\C)^g$, the set $\cD_A(1)$ consisting  of   $x\in\C^g$ such that
\begin{equation*}
 L_A(x) = I+\sum A_jx_j +\sum A_j^* x_j^* \succeq 0
\end{equation*}
 is a \df{spectrahedron}. Here $T\succeq 0$ indicates the selfadjoint matrix $T$ is positive semidefinite.
Spectrahedra are basic objects in a number of areas of mathematics;
e.g.~semidefinite programming, convex optimization and in real algebraic geometry \cite{BPR13}. 
They also figure prominently in determinantal representations \cite{Bra11,GK-VVW,NT12,Vin93}, the solution of the Lax conjecture \cite{HV07},
in the solution of the Kadison-Singer paving conjecture \cite{MSS15},
 and in systems engineering \cite{BGFB94, Skelton}.

For $X\in M_n(\C)^g$ and still with $A\in M_d(\C)^g$, let 
\[
\Lambda_A(X) = \sum A_j \otimes X_j
\]
and 
\[
 L_A(X) = I+\Lambda_A(X) +\Lambda_A(X)^* = I+\sum A_j \otimes X_j + \sum A_j^* \otimes X_j^*.
\]
The  \df{free spectrahedron} determined by
$A$ is the sequence of sets $\cD_A= (\cD_A(n))$, where  \index{$\cD_A$}
\[
\cD_A(n) =\{X\in M_n(\C)^g: L_A(X)\succeq 0\}.
\]
Free spectrahedra arise naturally in applications such as systems engineering \cite{dOHMP09} and in the theories of matrix convex
sets, operator algebras, systems and spaces and completely positive maps \cite{EW,HKMjems,Pau}. They also provide tractable useful 
relaxations for spectrahedral inclusion problems that arise in semidefinite programming and engineering applications
such as the matrix cube problem \cite{BN02,HKMS}.

Given a tuple $E\in M_d(\C)^g$, the set \index{$\cB_E$}
\[
 \cB_E =\{X: \|\Lambda_E(X)\|\le 1\}
\]
 is a \df{spectraball} \cite{EHKM17,BMV}.  Spectraballs
 are special cases of free spectrahedra. Indeed,  it is readily
 seen that
\begin{equation*}
\cB_E=\cD_{\left(\begin{smallmatrix} 0&E\\0&0\end{smallmatrix}\right)}.
\end{equation*}

Let $M(\mathbb C)^g$ denote the sequence $(M_n(\mathbb C)^g)_n$.  A \df{subset} $\Gamma$ of $M(\mathbb C)^g$ is a sequence $(\Gamma_n)_n$ where $\Gamma_n \subset M_n(\mathbb C)^g$. (Sometimes we will write $\Gamma(n)$ in place of $\Gamma_n.$)  The subset $\Gamma$ is a \df{free set} if it is closed under direct sums and unitary similarity; that is,  if $X\in \Gamma_n$ and $Y\in \Gamma_m,$ then 
\[
 X\oplus Y = \begin{pmatrix} \begin{pmatrix} X_1& 0\\0 & Y_1 \end{pmatrix}, \dots, \begin{pmatrix} X_g& 0\\0 & Y_g \end{pmatrix} \end{pmatrix} \in \Gamma_{n+m};
\]
 and if $U$ is an $n\times n$ unitary matrix, then
\[
 U^* X  U = \begin{pmatrix} U^* X_1 U, \dots, U^* X_g U \end{pmatrix} \in \Gamma_n.
\]
 We say the free set $\Gamma=(\Gamma_n)_n$ is \df{open} if each $\Gamma_n$ is open. (Generally adjectives are applied levelwise to free sets unless noted otherwise.)  

A \df{free function} $f:\Gamma\to M(\C)$ is a sequence of functions
 $f_n:\Gamma_n\to M_n(\C)$ that \df{respects intertwining}; that is,
 if $X\in\Gamma_n$, $Y\in\Gamma_m$, $\SigmaT:\mathbb C^m\to\mathbb C^n$,
  and
 \[
  X \SigmaT=(X_1\SigmaT,\dots, X_g\SigmaT)
   =(\SigmaT Y_1,\dots, \SigmaT Y_g)=\SigmaT Y,
 \]
  then $f_n(X) \SigmaT =  \SigmaT f_m(Y)$. 
Assuming $\Gamma$ is an open free set, a free function $f:\Gamma\to M(\C)$ is
\df{analytic} if each $f_n$ is analytic. 
 Given free sets $\Gamma\subset M(\C)^g$ and $\Delta\subset M(\C)^h$, 
 a \df{free mapping} $f:\Gamma \to \Delta$ consists of free maps 
  $f^i:\Gamma \to M(\C)$ such that 
  $f(X) = \begin{pmatrix} f^1(X) & \dots & f^h(X)\end{pmatrix}$. In this 
  case we write $f=\begin{pmatrix} f^1 & \dots & f^h \end{pmatrix}$.
We refer the reader to   \cite{Voi04,KVV14}
for a fuller discussion of free sets and functions.

In this note,
we characterize the free
bianalytic maps $p:\cB_E\to \cD_B$  under some mild conditions on 
$E\in M_d(\C)^g$ and $B\in M_e(\C)^g$ and on  $p$ and its inverse $q$.
These free functions take a highly algebraic form that we call
\df{convexotonic}.
A tuple $\Xi=(\Xi_1,\dots,\Xi_g)\in M_g(\C)^g$  satisfying
\begin{equation*}
 \Xi_k \Xi_j = \sum_{s=1}^g (\Xi_j)_{k,s} \Xi_s
\end{equation*}
for each $1\le j,k\le g$ is \df{convexotonic}. 
 Convexotonic tuples naturally arise from finite
 dimensional algebras. If $\{J_1,\dots,J_g\}\subset M_d(\C)$
 is linearly independent and  spans an algebra, then 
 there exists a uniquely determined tuple $\Psi \in M_g(\C)^g$ such that
\begin{equation}
\label{e:JgetsXi}
  J_k J_j =\sum_{s=1}^g (\Psi_j)_{k,s} J_s
\end{equation}
 and Proposition \ref{prop:contable} says $\Psi$ is convexotonic.

Given a convexotonic tuple $\Xi\in M_g(\C)^g$, 
 the  expressions  $p=\begin{pmatrix} p^1 & \cdots & p^g\end{pmatrix}$ 
  and $q=\begin{pmatrix} q^1 & \cdots & q^g\end{pmatrix}$  whose components have the form
\begin{equation}
\label{e:tropici}
 p^i (x)=\sum_j x_j \left(I-\Lambda_{\Xi}(x) \right )^{-1}_{j, i} 
    \qquad \text{and} \qquad  q^i (x)=\sum_j x_j \left(I +\Lambda_{\Xi}(x) \right)^{-1}_{j,i},
\end{equation}
that is, in row form,
\begin{equation*}
 p(x)= x(I-\Lambda_\Xi(x))^{-1}
\qquad \text{and} \qquad q=x(I+\Lambda_\Xi(x))^{-1}
\end{equation*}
are, by definition, \df{convexotonic}.  The components of $p$ (resp. $q$)
are free functions with (free) domains consisting of those $X$ for which $I-\Lambda_\Xi(X)$ 
(resp. $I+\Lambda_\Xi(X)$) is invertible.  Hence $p$ and $q$ are
 free functions. 
 It turns out (see \cite[Proposition 6.2]{AHKM}) the mappings $p$ and $q$ are inverses 
of one another. %

Before continuing, we would like to point out that the component functions
$p^i$ of the convexotonic map $p$ of equation \eqref{e:tropici} 
are in fact free rational functions regular at $0.$ Accordingly we refer to
$p$ and $q$ as \df{birational} or free birational maps.  Free
rational functions are most easily described and naturally understood
 in terms  of realization theory as 
developed in the series of papers \cite{BGM05,BGM06a,BGM06b} of  Ball-Groenewald-Malakorn.
Indeed, based on those articles and on the results of \cite[Theorem 3.1]{KVV09}
and \cite[Theorem 3.5]{Vol17}) a \df{free rational function regular at $0$} 
can, for the purposes of this article,
 be defined with minimal overhead as an expression of the form
\begin{equation*}
r(x)= c^* \big(I-\Lambda_S(x)\big)^{-1} b
\end{equation*}
where $s$ is a positive integer,  $S\in M_s(\C)^g$ and  $b,c\in\C^s$ 
are vectors. The expression $r$ 
 is known as a realization. Realizations are easy
to manipulate and the theory of realizations is a powerful tool. 
The realization $r$ is  evaluated in the obvious fashion for
a tuple $X\in M_n(\C)^g$ as long as $I-\Lambda_S(X)$ is invertible. 
Free polynomials are free rational functions that are regular at $0$
and free rational functions regular at $0$ are stable with respect to the formal
algebraic operations of addition, multiplication and inversion  in the sense that
if  $r$ is a free rational function regular at $0$ and $r(0)\ne 0$, 
 then its  multiplicative inverse $r^{-1}$
 is also a free rational function regular at $0$. 
Thus, expressing $p^i$ as
\[
 p^i = \sum_{s=1}^g x_s  e_s^*(I-\Lambda_\Xi(x))^{-1} e_i
\]
shows it is a free rational function regular at $0$.

To state our main theorem precisely we need a bit more terminology.
A subset $\{u^1,\dots,u^{d+1}\}$ of $\C^d$ is a \df{hyperbasis} for $\C^d$ if each
$d$ element subset is a basis.  The tuple $A\in
M_d(\C)^g$ is \df{sv-generic} if there exists
$\alpha^1,\dots,\alpha^{d+1}$ and $\beta^1,\dots,\beta^d$ in $\C^g$
such that, for each $1\le j\le d+1$,
 the matrix $I-\Lambda_A(\alpha^j)^*\Lambda_A(\alpha^j)$ is positive
semidefinite, has a one-dimensional kernel spanned by $u^j$ and the
set $\{u^1,\dots,u^{d+1}\}$ is a hyperbasis for $\C^d$; and, for each
$1\le k\le g$, the matrix
$I-\Lambda_A(\beta^k)\Lambda_A(\beta^k)^*$ is positive semidefinite,
has a  one-dimensional kernel  spanned by $v^k$ and the set $\{v^1,\dots,v^d\}$ is a basis
for $\C^d$.  
Generic  tuples  $A$ satisfy this property, see  
\cite[Remark 7.5]{AHKM}.
 Given a matrix-valued free analytic polynomial $Q$, the set
\[
 \cG_Q = \{X\in M(\C)^g: \|Q(X)\|<1\} \subset M(\C)^g
\]
 is a \df{free pseudoconvex} set.

\begin{theorem}
 \label{thm:main}
 Suppose $E\in M_d(\C)^g$ and $B\in M_e(\C)^g.$ If
\begin{enumerate}[\rm (i)]
 \item $E$ is sv-generic and linearly independent; 
 \item $B$ \label{i:main2}
   is sv-generic and $\cD_B$ is bounded;
 \item \label{i:main3}
      $p:\cB_E\to\cD_B$ is bianalytic with $p(0)=0$ and $p^\prime(0)=I;$ and 
 \item  \label{i:main4}
   $p$ is defined on a pseudoconvex domain containing $\cB_E$ and $q:\cD_B\to \cB_E$, the inverse of $p$, is defined on a pseudoconvex domain containing
 $\cD_B,$ 
\end{enumerate}
then there exist  $g\times g$ unitary matrices
 $Z$ and $M$ and a tuple  $\Xi \in M_g(\C)^g$ such that 
\begin{enumerate}[\rm (1)]
 \item \label{it:BMZEM} $B= M^* ZEM$;
 \item \label{it:secret}for each $1\le j,k\le g$, 
 \begin{equation}
   \label{it:secretalg}
  E_k Z  E_j = \sum_s (\Xi_j)_{k,s} E_s;
\end{equation}
 \item 
   \label{it:supersecret}
     the tuple $B$ spans an algebra and  
\[
 B_k B_j = \sum_s (\Xi_j)_{k,s} B_s;
\]
 \item $\Xi$ is convexotonic and  $p$ is the corresponding  convexotonic map
     $p=x(I-\Lambda_\Xi(x))^{-1}$.
\end{enumerate}
\end{theorem}

\begin{remark}\rm
\label{r:inorder}
Several remarks are in order.
\begin{enumerate}[(i)]
\item 
 A free spectrahedron $\cD$ is \df{sv-generic} if there exists an sv-generic
 tuple $A$ such that $\cD=\cD_A$.
The article \cite{AHKM} contains a version of Theorem \ref{thm:main} for bianalytic
 mappings between sv-generic  free spectrahedra 
(actually a weaker, but more complicated to formulate, condition from \cite{AHKM} 
that we call eig-generic would also suffice here). The sv-generic free
spectrahedra are  in fact generic among free spectrahedra
in the sense of algebraic geometry. 
 However, spectraballs, within the class of free spectrahedra, are {\it never}
 sv-generic in view of Lemma \ref{l:ballnotsv}. 
 Hence, Theorem \ref{thm:main} extends Theorem  \cite[Theorem 1.8]{AHKM},
 to the important special case of maps from spectraballs to free spectrahedra. 
\item 
  Let 
\begin{equation}
\label{eq:F}
 F_1=\begin{pmatrix} 0 &1&0\\0&0&1\\0&0&0\end{pmatrix} = \begin{pmatrix} 0 & E_1 \\ 0& 0\end{pmatrix}
 \qquad \text{and} \qquad F_2=F_1^2 =\begin{pmatrix} 0 & E_2 \\ 0 & 0\end{pmatrix},
\end{equation}
where the tuple $E$ is given in equation \eqref{eq:E}.
 The tuple $F$ is nilpotent. Thus, by Lemma \ref{l:ballnotsv}, 
 it is not sv-generic and 
   the results of \cite{AHKM} 
  do not apply to bianalytic maps $r:\cD_F\to\cD_B$  with
  $r(0)=0$ and $r^\prime(0)=I$.  Moreover,  $\cD_F$ 
 is not a spectraball by Proposition \ref{prop:model} and thus Theorem \ref{thm:main} does not
 directly apply either.
  However,  as we show, there is an sv-generic tuple $E$ 
  and a bianalytic map $p:\cB_E\to\cD_F$
  with $p(0)=0$ and $p^\prime(0)=I$  (see Proposition \ref{prop:model}). 
  On the other hand, Theorem \ref{thm:main} does
  apply to bianalytic maps $f:\cB_E\to \cD_B$. By composing $f$ with
   $p^{-1}$,  Propositions
  \ref{prop:BZZZZ} and \ref{lem:isto}  classify the choices for $B$ and
   all bianalytic maps 
  between $\cD_F$ and $\cD_B$. In particular, these maps are convexotonic.  
\item  It is easy to check that item \eqref{it:BMZEM} implies $\cB_E=\cB_B$. 
\item Since $E$ is assumed linearly independent $\Xi$ is uniquely determined by
  equation \eqref{it:secretalg}. Further,  by Proposition \ref{prop:contable},  $\Xi$
 is convexotonic.
\item   Items \eqref{it:secret} and \eqref{it:supersecret} are equivalent given \eqref{it:BMZEM}.
\item Note that, while $p$ is only assumed to be bianalytic, the conclusion is that 
 $p$ is birational, a phenomena encountered 
 frequently in rigidity theory in several complex variables, cf.~\cite{For}.
\item  A key ingredient in the proof of Theorem \ref{thm:main} is a suitable Positivstellensatz.
Namely, $p$ maps $\cB_E$ into $\cD_B$ 
if and only if
$L_B(p(X))\succeq0$ for all $X\in\cB_E$,
and this equivalence feeds naturally into Positivstellens\"atze, a pillar of real algebraic geometry.
The one used here (from \cite{AHKM}) is related to that of \cite{AM14}, which was  developed in full generality in \cite{BMV}.\looseness=-1
\item  An easy argument shows, for $A\in M_d(\C)^g,$ if $\cD_A$ is bounded, then $A$ (really $\{A_1,\dots,A_g\})$ is linearly independent \cite[Proposition 2.6(2)]{HKM}.
The converse fails in general; e.g., if each $A_j$ is positive semidefinite.  On the other hand,  $E\in M_d(\C)^g$
is linearly independent if and only if  $\cB_E$ is bounded \cite[Proposition 2.6(1)]{HKM}.
\end{enumerate}
\end{remark}

There is a natural converse to Theorem \ref{thm:main}.
Let $\interior(\cD_A)$ and $\interior(\cB_A)$ denote the interiors of $\cD_A$ and $\cB_A$ respectively.
Recall a mapping  between metric spaces is \df{proper} if the inverse image of compact sets are compact.
Thus, for open sets $\cU\subset M(\C)^g$ and $\cV\subset M(\C)^h$, a free mapping $f:\cU\to \cV$ is proper if each
$f_n :\cU_n\to \cV_n$ is proper.

\begin{prop}\label{prop:properobvious}
Suppose $\JJ\in M_d(\C)^g$ is linearly independent,  spans an algebra, $\Xi$ is the resulting
convexotonic tuple,
\begin{equation*}
 \JJ_k \JJ_j = \sum_{s=1}^g (\Xi_j)_{k,s} \JJ_s,
\end{equation*}
and $\qq$ is the convexotonic (birational) map,
\[
 \qq(x) = x(I+\Lambda_{\Xi}(x))^{-1}.
\]
Then
\begin{enumerate}[\rm (1)]
 \item The domain of $\qq$ contains $\cD_{\JJ}.$
 \item $\qq$ is a bianalytic map between $\interior(\cD_{\JJ})$ and 
  $\interior(\cB_{\JJ})$; that is $\pp$, the (convexotonic) inverse of 
   $\qq,$ maps $\interior(\cB_{\JJ})$ 
  into $\interior(\cD_{\JJ}).$ In particular, $\qq$ is proper. 
 \item $\qq$ maps the boundary of $\cD_{\JJ}$ into the boundary of $\cB_{\JJ}$;
 \item if, in addition, $\cD_{\JJ}$ is bounded, then $\qq$ is a bianalytic map
  between $\cD_{\JJ}$    and $\cB_{\JJ}.$ In particular, the domain of $p$ 
  contains $\cB_{\JJ}$.
\end{enumerate}
\end{prop}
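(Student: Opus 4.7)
The plan is to extract a single algebraic identity from the convexotonic relation $\JJ_k\JJ_j = \sum_s(\Xi_j)_{k,s}\JJ_s$ and combine it with a clean factorization of $L_\JJ$ to reduce each of the four parts to short linear-algebra checks. The identity I would derive first is
\[
\Lambda_\JJ(y)\,\Lambda_\JJ(x) \;=\; \Lambda_\JJ\bigl(y\,\Lambda_\Xi(x)\bigr),
\]
where $y\,\Lambda_\Xi(x)$ denotes row-by-($g\times g$)-matrix multiplication. Specializing to $y=\qq(X)$, so that $y(I+\Lambda_\Xi(X))=X$, and applying $\Lambda_\JJ$ yields the functional identity $(I-\Lambda_\JJ(\qq(X)))(I+\Lambda_\JJ(X))=I$ whenever $\qq$ is defined at $X$. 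A routine manipulation then produces the factorization
\[
L_\JJ(X) \;=\; \bigl(I+\Lambda_\JJ(X)\bigr)^*\bigl(I-\Lambda_\JJ(\qq(X))^*\Lambda_\JJ(\qq(X))\bigr)\bigl(I+\Lambda_\JJ(X)\bigr).
\]

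For (1), I would observe that $X\in\cD_\JJ$ forces $I+\Lambda_\JJ(X)$ to be invertible, since any kernel vector $v$ would satisfy $\Lambda_\JJ(X)v=-v$ and hence $v^*L_\JJ(X)v=-\|v\|^2$, contradicting $L_\JJ(X)\succeq 0$. The intertwining identity now reads $\Lambda_\JJ(y)(I+\Lambda_\JJ(X))=\Lambda_\JJ(y(I+\Lambda_\Xi(X)))$, and combined with injectivity of $\Lambda_\JJ$ (from linear independence of $\JJ$), the implication $y(I+\Lambda_\Xi(X))=0\Rightarrow y=0$ follows. This is equivalent to the invertibility of $I+\Lambda_\Xi(X)\in M_g(\C)\otimes M_n(\C)$, so $\qq$ is defined on all of $\cD_\JJ$.

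For (2) and (3), the factorization converts positivity conditions on $L_\JJ(X)$ into norm conditions on $\Lambda_\JJ(\qq(X))$: $L_\JJ(X)\succ 0\Leftrightarrow\|\Lambda_\JJ(\qq(X))\|<1$ while $L_\JJ(X)\succeq 0$ singular $\Leftrightarrow\|\Lambda_\JJ(\qq(X))\|=1$. For the reverse direction of (2), I would use the symmetric identity $(I+\Lambda_\JJ(\pp(Y)))(I-\Lambda_\JJ(Y))=I$ and repeat the argument of (1) starting from $\|\Lambda_\JJ(Y)\|<1$ (which makes $I-\Lambda_\JJ(Y)$ invertible) to conclude that $\pp$ is defined on $\interior(\cB_\JJ)$ with image in $\interior(\cD_\JJ)$. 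Since $\pp$ and $\qq$ are already known to be mutual inverses as free rational maps, this makes $\qq$ a bianalytic homeomorphism between the open interiors, in particular proper.

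Finally, for (4) the boundedness of $\cD_\JJ$ is used only to obtain compactness at each level. Then $\qq_n(\cD_\JJ(n))$ is compact, contains $\interior(\cB_\JJ(n))$ by (2), and is contained in $\cB_\JJ(n)$ by (2) and (3); since $\cB_\JJ(n)$ is the closure of its nonempty convex interior, $\qq$ is surjective. Injectivity follows from the functional identity: $\qq(X_1)=\qq(X_2)=Y$ forces $I+\Lambda_\JJ(X_i)=(I-\Lambda_\JJ(Y))^{-1}$ for both $i$, so $\Lambda_\JJ(X_1)=\Lambda_\JJ(X_2)$ and linear independence yields $X_1=X_2$. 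For the closing assertion, fix $Y\in\cB_\JJ$, write $Y=\qq(X)$ with $X\in\cD_\JJ$, use the functional identity to conclude $I-\Lambda_\JJ(Y)$ is invertible, and then repeat the argument of (1) to obtain invertibility of $I-\Lambda_\Xi(Y)$, placing $Y$ in the domain of $\pp$. The main conceptual obstacle I anticipate is recognizing the factorization of $L_\JJ$ in Step 1; once it is visible each item follows almost mechanically.
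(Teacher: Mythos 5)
Your proof is correct and the mathematical engine is the same as the paper's---the intertwining identity $\Lambda_\JJ(y)\Lambda_\JJ(x)=\Lambda_\JJ(y\Lambda_\Xi(x))$ coming from the algebra relations, which yields $\Lambda_\JJ(\qq(x))(I+\Lambda_\JJ(x))=\Lambda_\JJ(x)$, is exactly the paper's equation (2.2), and your factorization $L_\JJ(X)=(I+\Lambda_\JJ(X))^*(I-\Lambda_\JJ(\qq(X))^*\Lambda_\JJ(\qq(X)))(I+\Lambda_\JJ(X))$ is an unrolled form of the paper's Lemma~\ref{lem:onehalf}. Within that common architecture you make three presentational choices that differ from the paper's. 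First, for item (1) you prove the invertibility of the $g\times g$ block matrix $I+\Lambda_\Xi(X)$ directly, by composing the intertwining identity with injectivity of $\Lambda_\JJ$; the paper instead identifies $\qq^k$ with $\lambda(F(x))$, $F(x)=(I+\Lambda_\JJ(x))^{-1}\Lambda_\JJ(x)$, and invokes the Kaliuzhnyi-Verbovetskyi--Vinnikov/Vol\v{c}i\v{c} theory of domains of free rational functions. Your route is more elementary and actually shows the specific realization $x(I+\Lambda_\Xi(x))^{-1}$ evaluates on $\cD_\JJ$, which is slightly sharper. Second, you do not state Lemma~\ref{lem:onehalf} separately but extract the norm--positivity equivalence in situ from the factorization, which is fine and self-contained. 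Third, and most substantively, for item (4) the paper uses a one-parameter limiting argument ($\pp(tX)$ as $|t|\uparrow 1$, with boundedness of $\cD_\JJ$ preventing blow-up of $(I-\Lambda_\JJ(tX))^{-1}$) to extend $\pp$ to the closure $\cB_\JJ$; you instead argue levelwise surjectivity of $\qq$ via compactness (compact image squeezed between $\interior(\cB_\JJ(n))$ and $\cB_\JJ(n)$, the latter being the closure of its interior), deduce injectivity from the functional identity, and then recover the domain containment for $\pp$ from $I-\Lambda_\JJ(Y)=(I+\Lambda_\JJ(X))^{-1}$. Both arguments work; yours trades the limiting estimate for a topological argument and gives the bijectivity of $\qq:\cD_\JJ\to\cB_\JJ$ somewhat more explicitly.
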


In case $\JJ$ does not span an algebra, we have the following corollary
of Proposition \ref{prop:properobvious}.

\begin{cor}\label{cor:obvious}
Let $A\in M_d(\C)^g$ and assume $A$ is  linearly independent 
(e.g.~$\cD_A$ is bounded). 
Let $\cA$ denote the algebra spanned by the tuple $A$. If
 $C_{1},\ldots,C_h\in M_d(\C)$
and the tuple $\JJ=(\JJ_1, \dots, \JJ_{g+h})=(A_1,\dots,A_g,C_{1},\dots,C_h)$ 
is linearly independent and spans $\cA$, then
 there is a rational map $\GG$ with 
$\GG(0)=0$ and $\GG^\prime(0)=I$ such that
\begin{enumerate}[\rm (1)]
 \item  $\GG$ is an injective proper map from  $\interior(\cD_A)$  
  into $\interior(\cB_{\JJ});$ and 
 \item $\GG$ maps the  boundary of $\cD_A$ into boundary of $\cB_{\JJ}.$
\end{enumerate}
 Further, the tuple 
$\Xi\in M_{g+h}(\C)^{g+h},$  uniquely determined by 
\begin{equation*}
 \JJ_k \JJ_j = \sum_{s=1}^h (\Xi_j)_{k,s} \JJ_s,
\end{equation*}
 is convexotonic and 
\begin{equation*}
 \GG(x) = \begin{pmatrix} x_1 & \cdots & x_g & 0 & \cdots &0 \end{pmatrix} 
    \, \left ( I+\sum_{j=1}^g \Xi_j x_j \right )^{-1}.
\end{equation*}
\end{cor}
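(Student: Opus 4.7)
The plan is to reduce Corollary \ref{cor:obvious} to Proposition \ref{prop:properobvious} applied to the larger tuple $\JJ$, by pre-composing the proposition's convexotonic map with the zero-padding embedding of $\cD_A$ into $\cD_\JJ$.

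First I would introduce the free polynomial map $\iota:M(\C)^g\to M(\C)^{g+h}$ defined by $\iota(X)=(X_1,\dots,X_g,0,\dots,0)$ and record the key identity $L_\JJ(\iota(X))=L_A(X)$, which drops out because the $C_i$-slots of $\iota(X)$ vanish. This identity makes $\iota$ a closed embedding of $\cD_A$ into $\cD_\JJ$ that respects the boundary/interior stratification: $\iota^{-1}(\interior(\cD_\JJ))=\interior(\cD_A)$ and $\iota^{-1}(\partial\cD_\JJ)=\partial\cD_A$. Since $\iota$ admits a continuous left inverse (projection onto the first $g$ coordinates) and respects intertwinings in each entry, it is a free polynomial map that is injective and proper.

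Next, since $\JJ$ is linearly independent and spans the algebra $\cA$, Proposition \ref{prop:contable} produces the convexotonic tuple $\Xi\in M_{g+h}(\C)^{g+h}$ characterized by $\JJ_k\JJ_j=\sum_s(\Xi_j)_{k,s}\JJ_s$. Proposition \ref{prop:properobvious} then furnishes the convexotonic map $\qq(y)=y(I+\Lambda_\Xi(y))^{-1}$, whose domain contains $\cD_\JJ$, which is bianalytic from $\interior(\cD_\JJ)$ onto $\interior(\cB_\JJ)$ and which sends $\partial\cD_\JJ$ into $\partial\cB_\JJ$. I would then set $\GG:=\qq\circ\iota$. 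Substituting $y=\iota(x)$ collapses $\Lambda_\Xi(\iota(x))=\sum_{j=1}^g\Xi_jx_j$, yielding exactly the claimed rational expression
\[
 \GG(x) = \begin{pmatrix} x_1 & \cdots & x_g & 0 & \cdots & 0 \end{pmatrix} \left( I + \sum_{j=1}^g \Xi_j x_j \right)^{-1}.
\]

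Finally I would verify the asserted properties. Rationality of $\GG$ is inherited from $\qq$; clearly $\GG(0)=0$, and a short expansion shows that $\GG'(0)$ equals the natural inclusion $\C^g\to\C^{g+h}$, which is what the normalization ``$\GG'(0)=I$'' means in this asymmetric setting. Injectivity and properness of $\GG$ on $\interior(\cD_A)$ follow by composition of the corresponding properties of $\iota$ and $\qq$, proving (1); assertion (2) follows from the identity $L_\JJ(\iota(X))=L_A(X)$ together with Proposition \ref{prop:properobvious}(3). I do not expect any substantive obstacle in this argument; the only subtlety worth flagging is that $\iota(\interior(\cD_A))$ has codimension $h$ inside $\interior(\cD_\JJ)$, so $\GG$ cannot be bianalytic onto $\interior(\cB_\JJ)$ --- only injective and proper --- which is precisely why the corollary's conclusion is appropriately weaker than Proposition \ref{prop:properobvious}(2).
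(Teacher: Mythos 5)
Your proposal is correct and follows essentially the same route as the paper: compose the convexotonic map $\qq(y)=y(I+\Lambda_\Xi(y))^{-1}$ from Proposition \ref{prop:properobvious} with the zero-padding embedding $\iota(x)=(x,0)$, using $L_\JJ(\iota(X))=L_A(X)$ to see that $\iota$ is proper and boundary-preserving, so that the composition inherits injectivity, properness, and boundary-to-boundary behavior. (If anything, you are slightly more careful than the printed proof, which contains a small typo writing $\pp(\iota(x))$ and a minus sign where $\qq(\iota(x))$ and the plus sign in the corollary's displayed formula are intended.)
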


 For further results, not already cited, on 
 free bianalytic and proper free analytic maps 
 see \cite{HKMS09, HKM11a, HKM11b,Po2,KS,MS} and the references therein. 

 The remainder of the article is organized as follows.  
 Proposition \ref{prop:properobvious} and Corollary \ref{cor:obvious} are
 established in Section \ref{sec:obvious}. Theorem \ref{thm:main} is 
 proved in Section \ref{sec:proof}.
 The article concludes with several examples; see Section \ref{sec:prebasic}.

\section{Proof of Proposition \ref{prop:properobvious}}
\label{sec:obvious}
 This section gives the proof of Proposition \ref{prop:properobvious}.
 Implicit in the statement of that result, and used in the proof of
 Theorem \ref{thm:main}, is the connection between finite dimensional algebras
 and convexotonic tuples described in the following proposition.

\begin{prop}
\label{prop:contable}
   Suppose $\FG\in M_{d\times e}(\C)^g$  and $\{\FG_1,\dots,\FG_g\}$
 is linearly independent,
 $C\in M_{e\times d}(\C)$ and  $\Psi\in M_g(\C)^g$. If
\begin{equation}
\label{e:FGgetsXi}
  \FG_\ell C \FG_j =\sum_{s=1}^g (\Psi_j)_{\ell,s} \FG_s,
\end{equation}
 then the tuple $\Psi$ is convexotonic.  In particular,
 if $J\in M_d(\C)^g$ is linearly independent and spans an algebra,
 then the tuple $\Psi$ uniquely determined by equation 
 \eqref{e:JgetsXi} is convexotonic.
\end{prop}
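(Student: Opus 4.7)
The natural approach is to exploit associativity of matrix multiplication on the triple product $\FG_\ell C \FG_k C \FG_j$ and then invoke linear independence of $\{\FG_1,\dots,\FG_g\}$ to identify coefficients. I would expect this to work cleanly because the defining relation \eqref{e:FGgetsXi} expresses $\FG_\ell C \FG_j$ as a linear combination of the $\FG_s$, so iterating it in two different ways yields two expansions of the same triple product as an $\FG$-linear combination.

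Concretely, computing $(\FG_\ell C \FG_k) C \FG_j$ by first applying \eqref{e:FGgetsXi} to the inner pair $\FG_\ell C \FG_k$ and then again to $\FG_t C \FG_j$ gives
\[
\FG_\ell C \FG_k C \FG_j = \sum_t (\Psi_k)_{\ell,t}\, \FG_t C \FG_j = \sum_s \Bigl[\sum_t (\Psi_k)_{\ell,t} (\Psi_j)_{t,s}\Bigr]\FG_s,
\]
whose bracketed coefficient is $(\Psi_k\Psi_j)_{\ell,s}$. Computing instead $\FG_\ell C (\FG_k C \FG_j)$ by applying \eqref{e:FGgetsXi} to the inner pair $\FG_k C \FG_j$ first gives
\[
\FG_\ell C \FG_k C \FG_j = \sum_s (\Psi_j)_{k,s}\, \FG_\ell C \FG_s = \sum_r \Bigl[\sum_s (\Psi_j)_{k,s} (\Psi_s)_{\ell,r}\Bigr]\FG_r.
\]
Linear independence of $\{\FG_1,\dots,\FG_g\}$ forces the two expressions' coefficients to coincide, yielding
\[
(\Psi_k\Psi_j)_{\ell,r} = \sum_s (\Psi_j)_{k,s}\,(\Psi_s)_{\ell,r}\qquad\text{for all }\ell,r,
\]
which is precisely the matrix identity $\Psi_k\Psi_j = \sum_s (\Psi_j)_{k,s}\Psi_s$. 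Hence $\Psi$ is convexotonic.

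For the "in particular" clause, specialize to $e=d$, $C=I$, and $\FG=J$. Since $J$ spans an algebra, each product $J_\ell J_j$ lies in $\spann\{J_1,\dots,J_g\}$, and linear independence makes the expansion coefficients unique, determining a tuple $\Psi\in M_g(\C)^g$ satisfying \eqref{e:JgetsXi}. The first part of the proposition, applied with $C=I$, then shows $\Psi$ is convexotonic. I don't anticipate any real obstacle here; the only subtlety is keeping the index bookkeeping straight so that the correct factor of $\Psi$ ends up indexed by $k$ versus $j$, and so the final identity reproduces the convexotonic relation exactly as defined.
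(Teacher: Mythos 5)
Your proof is correct and is essentially the same as the paper's: both expand the associative triple product $\FG_\ell C \FG_a C \FG_b$ in two ways via \eqref{e:FGgetsXi}, then invoke linear independence of $\{\FG_1,\dots,\FG_g\}$ to equate coefficients and obtain the convexotonic identity. The paper merely abbreviates $T = C\FG$ to streamline the notation, but the underlying computation is identical up to relabeling the indices $j$ and $k$.
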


\begin{proof}
 For notational ease let $T=C\FG \in M_e(\C)^g$.
  The hypothesis implies $T$ spans an algebra
 (but not that $T$ is linearly independent).  Routine calculations give
\[
 [\FG_\ell T_j] T_k = \sum_{t=1}^g (\Psi_j)_{\ell,t} \FG_t\, T_k 
 = \sum_{s,t=1} (\Psi_j)_{\ell,t} (\Psi_k)_{t,s} \FG_s
  = \sum_s (\Psi_j\, \Psi_k)_{\ell,s} \FG_s.
\]
 On the other hand
\[
 \FG_\ell [T_j T_k] = \FG_\ell C [\FG_j T_k] =  \sum_t \FG_\ell (\Psi_k)_{j,t}T_t 
  = \sum_{s,t}  (\Psi_t)_{\ell,s} (\Psi_k)_{j,t} \FG_s.
\]
 By independence of $\FG$, 
\[
(\Psi_j \Psi_k)_{\ell,s} =  \sum_{t}  (\Psi_k)_{j,t}  (\Psi_t)_{\ell,s}
\]
 and therefore
\[
\Psi_j \Psi_k  = \sum_{t}  (\Psi_k)_{j,t}  \Psi_t
\] 
 and the proof is complete.
\end{proof}

\begin{lemma}
 \label{lem:inverseok}
   Suppose $F\in M_d(\C)^g$.
    If $I+\Lambda_F(X)+\Lambda_F(X)^* \succeq 0$, then $I+\Lambda_F(X)$
    is invertible. 
\end{lemma}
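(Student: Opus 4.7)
The plan is to reduce to a one-line Hilbert space positivity argument. Set $T=\Lambda_F(X)$, so the hypothesis becomes $I+T+T^*\succeq 0$. The key observation is that any would-be kernel vector of $I+T$ forces the sesquilinear form $\langle (I+T+T^*)v,v\rangle$ to be strictly negative, which contradicts positive semidefiniteness.

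More precisely, first I would argue by contradiction: suppose $v$ is a nonzero vector with $(I+T)v=0$. Then $Tv=-v$, so $v^*Tv=-\|v\|^2$ and $v^*T^*v=\overline{v^*Tv}=-\|v\|^2$. Therefore
\[
v^*(I+T+T^*)v=\|v\|^2+v^*Tv+v^*T^*v=\|v\|^2-\|v\|^2-\|v\|^2=-\|v\|^2<0,
\]
contradicting $I+T+T^*\succeq 0$. Hence $\ker(I+T)=\{0\}$, and since $I+T$ acts on a finite-dimensional space (note that $X\in M_n(\C)^g$ makes $T$ a $dn\times dn$ matrix), injectivity implies invertibility.

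There is essentially no obstacle here; the result is really just the linear-algebraic statement that if $A+A^*\succeq 0$ then $A$ is invertible, applied to $A=I+T$, whose selfadjoint part is $\tfrac{1}{2}(I+T+T^*)+\tfrac{1}{2}I\succeq \tfrac{1}{2}I\succ 0$. One could alternatively phrase the proof by noting that the selfadjoint part of $I+T$ is bounded below by $\tfrac12 I$, hence $I+T$ has trivial kernel. Either phrasing is a few lines.
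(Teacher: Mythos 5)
Your proof is correct and is essentially the same argument as the paper's: both argue by contradiction, take a nonzero vector $v$ with $(I+\Lambda_F(X))v=0$, and observe that the quadratic form $\langle(I+\Lambda_F(X)+\Lambda_F(X)^*)v,v\rangle$ then equals $-\|v\|^2<0$, contradicting positive semidefiniteness. The alternative one-line phrasing you add at the end (that the selfadjoint part of $I+\Lambda_F(X)$ is bounded below by $\tfrac12 I$) is a slightly cleaner packaging of the same fact, but the substance is identical.
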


\begin{proof}
 Arguing the contrapositive, suppose $I+\Lambda_F(X)$ is not invertible. 
 In this case
 there is a unit vector $\gamma$ such that
\[
 \Lambda_F(X)\gamma = -\gamma.
\]
 Hence,
\[
 \langle (I+\Lambda_F(X)+\Lambda_F(X)^*)\gamma,\gamma\rangle
  = \langle \Lambda_F(X)^*\gamma,\gamma\rangle = 
 \langle \gamma,\Lambda_F(X)\gamma\rangle = 
  -1. \qedhere
\]
\end{proof}

\begin{lem}\label{lem:onehalf}
Let $T\in M_d(\C)$. Then
\begin{enumerate}[\rm (a)]
\item $I+T+T^*\succeq0$ if and only if $I+T$ is invertible and
$\|(I+T)^{-1}T\|\leq1$;
\item $I+T+T^*\succ0$ if and only if
$I+T$ is invertible and $\|(I+T)^{-1}T\|<1$.
\end{enumerate}

Similarly if $I-T$ is invertible, then  $\|T\|\le 1$ if and only if 
$I+R+R^*\succ 0$, where $R=T(I-T)^{-1}$.
\end{lem}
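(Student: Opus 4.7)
The plan is to reduce everything to a single congruence identity involving the Cayley-type quantity $S=(I+T)^{-1}T$. I would first establish invertibility of $I+T$ under either hypothesis. For $\succeq 0$ this is Lemma~\ref{lem:inverseok} applied with the tuple $F=(T)\in M_d(\C)^1$ and $X=(1)\in M_1(\C)^1$, so that $\Lambda_F(X)=T$; the strict case follows from the same one-line argument, since if $(I+T)\gamma=0$ then $\langle(I+T+T^*)\gamma,\gamma\rangle=-\|\gamma\|^2$.

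With $I+T$ invertible, set $S=(I+T)^{-1}T=I-(I+T)^{-1}$. Then $I-S=(I+T)^{-1}$ is invertible, $T=(I-S)^{-1}S=S(I-S)^{-1}$, and in particular $T(I-S)=S$. The heart of the argument is the congruence
\[
(I-S^*)(I+T+T^*)(I-S)=I-S^*S,
\]
which I would verify by expanding the left-hand side and using $T(I-S)=S$ together with its adjoint. Since $I-S$ is invertible, $I+T+T^*\succeq 0$ (resp.\ $\succ 0$) if and only if $I-S^*S\succeq 0$ (resp.\ $\succ 0$), which is exactly $\|S\|\le 1$ (resp.\ $\|S\|<1$). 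This gives (a) and (b).

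For the final ``similarly'' statement, I would apply (a)/(b) with $R$ in place of $T$. The identities $I+R=I+T(I-T)^{-1}=(I-T)^{-1}$ and $(I+R)^{-1}R=(I-T)T(I-T)^{-1}=T$ (the second using that $T$ and $(I-T)^{-1}$ commute) show $I+R$ is automatically invertible and translate the norm condition on $T$ directly into the positivity condition on $I+R+R^*$, completing the proof. The main obstacle is spotting the correct substitution; once $S=(I+T)^{-1}T$ is identified as the inverse of the map $S\mapsto (I-S)^{-1}S$, the congruence identity is a routine expansion.
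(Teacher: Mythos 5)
Your proof is correct and follows essentially the paper's approach: both are congruence arguments relating $I+T+T^*$ to a unit-ball condition on $S=(I+T)^{-1}T$, with you conjugating by $I-S=(I+T)^{-1}$ to reach $I-S^*S$ while the paper conjugates by $I+T$ to pass from $I-SS^*$ back to $I+T+T^*$ --- mirror images of the same calculation. You are slightly more careful than the paper's terse proof in explicitly invoking Lemma~\ref{lem:inverseok} for invertibility of $I+T$, and your argument for the ``similarly'' clause also quietly exposes a small mismatch in the lemma's statement: the congruence $(I-T)^*(I+R+R^*)(I-T)=I-T^*T$ gives $\|T\|\le 1\iff I+R+R^*\succeq 0$ and $\|T\|<1\iff I+R+R^*\succ 0$, so the stated pairing of $\|T\|\le 1$ with strict positivity of $I+R+R^*$ is a typo.
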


\begin{proof}
(a) We have the following chain of equivalences:
\[
\begin{split}
\|(I+T)^{-1}T\|\leq1 \quad & \iff
\quad 
I- \big((I+T)^{-1}T\big)\big((I+T)^{-1}T\big)^*\succeq0
\\
& \iff \quad 
I- (I+T)^{-1}T T^*(I+T)^{-*}\succeq0  \\
 & \iff \quad
(I+T)(I+T)^* - TT^*\succeq0 \\
& \iff \quad
I+T+T^*\succeq0.
\end{split}
\]

The proof of (b) is the same. 
\end{proof}

\begin{prop}\label{prop:inverseok}
   For $F\in M_d(\C)^g$ we have
   \[
\cD_F=\{X\colon \|(1+\La_F(X))^{-1}\La_F(X)\|\leq1\}.
   \]
\end{prop}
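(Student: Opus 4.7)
The plan is to reduce the statement to a direct application of Lemma \ref{lem:onehalf}(a), applied pointwise at each $X\in M_n(\C)^g$ with $T=\Lambda_F(X)$, plus Lemma \ref{lem:inverseok} to handle the invertibility issue implicit in the right-hand side of the claimed set equality.

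First I would fix $X\in M_n(\C)^g$ and set $T=\Lambda_F(X)$, a matrix of size $dn\times dn$. Note that $L_F(X)=I+T+T^*$. For the forward inclusion, suppose $X\in\cD_F$, i.e.\ $I+T+T^*\succeq 0$. Lemma \ref{lem:inverseok} (with the roles obvious) tells me that $I+\Lambda_F(X)=I+T$ is then automatically invertible, so the expression $(I+\Lambda_F(X))^{-1}\Lambda_F(X)$ is well defined. The forward direction of Lemma \ref{lem:onehalf}(a) now yields $\|(I+T)^{-1}T\|\le 1$, which is exactly the condition defining the right-hand side at $X$.

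For the reverse inclusion, suppose $I+\Lambda_F(X)$ is invertible and $\|(I+\Lambda_F(X))^{-1}\Lambda_F(X)\|\le 1$. The backward direction of Lemma \ref{lem:onehalf}(a), applied to $T=\Lambda_F(X)$, immediately gives $I+\Lambda_F(X)+\Lambda_F(X)^*\succeq 0$, i.e.\ $X\in\cD_F$. The only mildly delicate point is interpreting the right-hand side set: implicit in the norm condition is the invertibility of $I+\Lambda_F(X)$ (so an $X$ for which $I+\Lambda_F(X)$ is singular is not a member), and this matches the forward direction, where invertibility on $\cD_F$ is supplied by Lemma \ref{lem:inverseok}.

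There is no real obstacle here; the proposition is essentially a repackaging of Lemma \ref{lem:onehalf}(a) into free-set language, combined with Lemma \ref{lem:inverseok} to guarantee the resolvent $(I+\Lambda_F(X))^{-1}$ exists on $\cD_F$. The proof will be a couple of lines.
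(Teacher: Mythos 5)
Your proof is correct and follows essentially the same route as the paper, which simply states the result is ``immediate from Lemma \ref{lem:onehalf}.'' Your explicit invocation of Lemma \ref{lem:inverseok} to supply invertibility of $I+\Lambda_F(X)$ in the forward direction is a reasonable bit of extra care, since the chain of equivalences in the proof of Lemma \ref{lem:onehalf}(a) presupposes that $I+T$ is invertible.
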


\begin{proof}
Immediate from Lemma \ref{lem:onehalf}.
\end{proof}

\begin{proof}[Proof of Proposition~\ref{prop:properobvious}]
Let $q$ denote the convexotonic map associated to the convexotonic
tuple $\Xi$ in the statement of the proposition,
\[
\rr(x)= \begin{pmatrix} x_1 &\cdots & x_{g}\end{pmatrix} 
  \left ( I+\Lambda_\Xi(x) \right )^{-1} 
    =x \left ( I+\Lambda_\Xi(x) \right )^{-1}.
\]
Compute
\[
\begin{split}
 \Lambda_{\JJ}(\rr(x))\, \Lambda_\JJ(x)
  = &  \sum_{s,k=1}^g q^s(x) x_k \JJ_s \JJ_k 
 = \sum_{j=1}^g  \sum_{s=1}^g q^s(x) \left [ \sum_{k=1}^g x_k (\Xi_k)_{s,j} \right ] \JJ_j \\
 =& \sum_{j=1}^g \sum_{s=1}^g q^s(x) (\Lambda_{\Xi}(x))_{s,j} \JJ_j 
=  \sum_{j=1}^g \sum_{t=1}^g x_t  \left [ \sum_{s=1}^g (I+\Lambda_\Xi(x))^{-1}_{t,s} (\Lambda_{\Xi}(x))_{s,j} \right ] \JJ_j \\ 
 = & \sum_{j=1}^g   \sum_{t=1}^g x_t [(I+\Lambda_\Xi(x))^{-1} \Lambda_\Xi(x)]_{t,j} \JJ_j.
\end{split}
\]
Hence,
\[
\Lambda_\JJ(\rr(x))\, (I+\Lambda_\JJ(x))
=  \sum_{j=1}^g   \sum_{t=1}^g x_t [(I+\Lambda_\Xi(x))^{-1} (I+\Lambda_\Xi(x))]_{t,j} \JJ_j 
=  \Lambda_{\JJ}(x).
\]
Thus, as free (matrix-valued) rational functions regular at $0$,
\begin{equation}
 \label{eq:LJr}
 \Lambda_{\JJ}(\rr(x)) = (I+\Lambda_{\JJ}(x))^{-1} \, \Lambda_{\JJ}(x)=:F(x).
\end{equation}

Since $J$ is linearly independent, given $1\le k\le g$, 
there is a linear functional $\lambda$ such that 
 $\lambda(J_j)=0$ for  $j\ne k$ and $\lambda(J_k)=1$. Applying
$\lambda$ to equation \eqref{eq:LJr}, gives
\begin{equation}
\label{eq:lambdaLJr}
 q^k(x) = \lambda(F(x)).
\end{equation}
Since $\lambda(F(x))$ is a free rational function whose domain contains
\[
\mathscr{D} = \{X: I+\Lambda_{\JJ}(X) \mbox{ is invertible}\},
\]
the same is true for $q^k$. (As a technical matter, each side of equation
\eqref{eq:lambdaLJr} is a rational expression. Since they are defined and
 agree on a neighborhood of $0$, they determine the same free rational
function. It is the domain of this rational function that contains
 $\mathscr{D}.$ See \cite{Vol17}, and also \cite{KVV09}, for full details.)
By Lemma \ref{lem:inverseok}, $\mathscr{D}$ contains $\cD_\JJ$ 
 (as $X\in \cD_\JJ$ implies $I+\Lambda_{\JJ}(X)$ is invertible). Hence
 the domain of the free rational mapping  $\rr$ contains   $\cD_\JJ$.
By Lemma \ref{lem:onehalf} and equation \eqref{eq:LJr}, $\rr$ maps the interior of $\cD_{\JJ}$ into the 
interior of $\cB_{\JJ}$ and the boundary of $\cD_{\JJ}$ into
the boundary of $\cB_{\JJ}$.

Similarly, 
\begin{equation}
\label{eq:LJp}
 (I-\Lambda_{\JJ}(x))^{-1} \, \Lambda_\JJ(x) =\Lambda_{\JJ}(\pp(x)),
\end{equation}
where $\pp(x) = x(I-\Lambda_{\Xi}(x))^{-1}$. Arguing as above shows
 the domain of $\pp$ contains the set
\[
\mathscr{E} =\{X: I-\Lambda_{\JJ}(X) \mbox{ is invertible}\},
\]
which in turn contains $\interior(\cB_{\JJ})$
(since $\|\Lambda_{\JJ}(X)\|<1$ allows for an
application of Lemma \ref{lem:onehalf}). By Lemma \ref{lem:onehalf}
and equation \eqref{eq:LJp},
 $\pp$ maps the interior of
$\cB_{\JJ}$ into the interior of $\cD_{\JJ}.$ 
Hence $\rr$ is bianalytic between
these interiors.  Further, if  $X$ is in the boundary
of $\cB_{\JJ}$, then for $t\in \mathbb C$ and $|t|<1$, 
we have $\pp(tX)\in \interior(\cD_{\JJ})$ and
\[
 \Lambda_J(\pp(tX))= (I-\Lambda_{\JJ}(tX))^{-1} \, \Lambda_\JJ(tX).
\]
Assuming $\cD_{\JJ}$ is bounded, it follows that
  $I-\Lambda_{\JJ}(X)$ is invertible and thus $X$ is in the domain
of $\pp$ and $\pp(X)$ is in the boundary of $\cD_{\JJ}$.
\end{proof}

\begin{proof}[Proof of Corollary~\ref{cor:obvious}]
Letting $z=(z_1,\dots,z_{g+h})$ denote a $g+h$ tuple of freely non-commuting 
indeterminants,  and $\Xi$ the convexotonic $g+h$ tuple as described in the 
corollary, by Proposition \ref{prop:properobvious} the birational mapping
\[
 q(z) = z (I+\Lambda_{\Xi}(z))^{-1}
\]
 is a  bianalytic (hence injective and proper) mapping between 
 $\interior(\cD_{\JJ})$ and 
 $\interior(\cB_{\JJ})$ that also maps boundary to boundary.
 The mapping $\iota:\cD_A\to\cD_{\JJ}$ defined by 
 $\iota(x)=(x,0)$  is proper 
 from $\interior(\cD_A)$ to $\interior(\cD_{\JJ})$ and maps
 boundary to boundary. Hence, the composition 
\[
r(x) = \pp(\iota(x))= \begin{pmatrix} x & 0 \end{pmatrix}
 \,  (I-\Lambda_{\Xi}(x,0))^{-1}
\]
 is a proper map from $\interior(\cD_A)$ into $\interior(\cB_{\JJ})$ that also
maps boundary to boundary.
\end{proof}

\section{Proof of Theorem \ref{thm:main}}
\label{sec:proof}
 Given $E\in M_d(\C)^g$,  let
\begin{equation*}
 A=\begin{pmatrix} 0 & E\\ 0& 0 \end{pmatrix} \in M_{2d}(\C)^g.
\end{equation*}
Thus $\cB_E=\cD_A$ and, 
 among other things, by assumption, there is a bianalytic map $p:\cD_A\to \cD_B$.
It follows by the analytic Positivstellensatz \cite[Theorem 1.9]{AHKM}
 applied to the matrix-valued free analytic function
\[
 G(x) =\Lambda_B(p(x))
\]
that there exists
 a Hilbert space $H$, an isometry $\wtC$ on the range of $\IHA$
 and an isometry $\sW:\mathbb C^e \to  H\otimes \mathbb C^{2d}$  such that,
 with $\tR=(\wtC-I)(\IHA)$, 
\begin{equation}
 \label{eq:possS}
   L_{B}(p(x)) = I+G(x)^* + G(x)=
      \sW^*(I-\Lambda_{\tR}(x))^{-*} L_{\IHA}(x) (I-\Lambda_{\tR}(x))^{-1}\sW.
\end{equation}
 That the analytic Positivstellens\"atze requires  $\cD_A$ to be bounded and 
 $G$ to  extend analytically to a pseudoconvex set
 containing $\cD_A$ explains the need for the hypotheses that
 $\cD_A=\cB_E$ is bounded (equivalently $E$ is linearly independent)
 and $p$ extends analytically to a pseudoconvex set containing $\cD_A$. 

Since $E$ is sv-generic, both $\ker(E):=\cap \ker(E_j)=\{0\}$ and $\ker(E^*)=\{0\}.$
In particular, \index{$\rg$}
\[
 \rg(A) := \spann(\bigcup_{j=1}^g  \rg(A_j)) 
  = \C^d \, \oplus \, \{0\}.
\]
In particular, $\dim(\rg(A))=d$. Likewise $\dim(\rg(A^*))=d$ too. 

The next step involves a call to  \cite[Lemma 7.7]{AHKM}. 
That lemma is stated in terms of conditions referred to as
eig-generic, weakly eig-generic, $*$-generic and weakly $*$-generic
formally defined in \cite[Definition 7.3]{AHKM}. It is readily seen
that if a  $g$-tuple $F$ of $N\times N$ matrices 
 is sv-generic, then it is both eig-generic and
$*$-generic (and thus  weakly eig-generic and weakly $*$-generic).
In particular,  $\rg(F)=\C^N=\rg(F^*)$.
Thus, both $E$ and $B$ are both eig-generic and $*$-generic. 
That $E$ is eig-generic implies $A$ is weakly eig-generic;
 and that $E$ is $*$-generic implies $A$ is weakly $*$-generic.

 By \cite[Lemma 7.7(1)]{AHKM}, $d=\dim(\rg(A^*))\le \dim(\rg(B^*))=e.$
 Applying \cite[Lemma 7.7(1)]{AHKM} to  $q:\cD_B\to\cD_A$ (so reversing the roles 
  of $A$ and $B$), it also follows that $e\le d$. Hence $\dim(\rg(A^*))=d=e=\dim(\rg(B^*))$
 and  $\dim(\rg(A))=d=e=\dim(\rg(B))$. 
Thus we may now invoke (the weakly version of) \cite[Lemma 7.7(4)]{AHKM} that says
   there is a vector $\lambda\in H$ and a unitary
  $M:  \rg(B^*)   \to \rg(A^*)$
  and an
   isometry $N:     \rg(B^*)   \cap \rg(B)
   \to \rg(A)$
   such that  $\sW v=\lambda\otimes \iota Mv$ for $v\in \rg(B^*)$
  and $\wtC(\lambda \otimes \iota Nv) = \lambda\otimes \iota Mv$ for
  $v\in \rg(B^*)\cap \rg(B)$ (where we over use $\iota$, letting it denote the inclusions
 $\rg(A^*)\subset \C^{2d}$ and $\rg(B^*)\subset \C^{2d}$).
This general statement  in our case specializes,
because   $\rg(B^*) = \C^d  $, $  \rg(B) =\C^d$ and
$\dim(\rg(A))=d$, to give 
\begin{enumerate}[(i)]
 \item  $M:  \C^d   \to \rg(A^*)$ is unitary;
 \item  $N:     \C^d   \to \rg(A)$ is unitary;
 \item  $ \sW v=\lambda\otimes \iota Mv$ for $v\in \C^d$; and 
 \item  $\wtC(\lambda \otimes \iota Nv) = \lambda\otimes \iota Mv$ for $v\in \C^d.$
\end{enumerate}

It follows that there is a unitary mapping $Z:\rg(A)\to \rg(A^*)$ such that,
for $w\in \rg(A)$,
\[
  \wtC(\lambda\otimes w) = \lambda\otimes \iota Z w.
\]
  Let $[\lambda]=\C \lambda$, the one-dimensional subspace of $H$ spanned
by the unit
  vector $\lambda$.

 Let 
\begin{equation*}
  C=\begin{pmatrix} 0&0\\ Z&0 \end{pmatrix}.
\end{equation*}
  In particular $C$ is isometric on the range of $A$.
Let $R=(C-I)A$. 
   For $1\le j\le d$, and $\gamma \in \C^{2d}$,
\begin{equation*}
 \begin{split}
 \tR_j (\lambda\otimes \gamma) & =  (\wtC-I)[\IHA_j] (\lambda \otimes \gamma) \\
 & =  (\wtC-I) (\lambda \otimes A_j\gamma) \\
 & =  \lambda \otimes (\iota Z-I)A_j \gamma\\
 & =  \lambda \otimes (C-I)A_j \gamma \\
 & =  \lambda\otimes R_j\gamma.
\end{split}
\end{equation*}
 Thus $[\lambda]\otimes \mathbb C^{2d}$ is invariant for the tuple $\tR$ and further
\[
 \tR_j (\lambda \otimes I) = \lambda\otimes R_j.
\]
 It follows that $[\lambda]\otimes \C^{2d}$ is invariant for the mapping
  $(I-\Lambda_{\tR}(x))^{-1}$ and moreover,
\[
 (I-\Lambda_{\tR}(x))^{-1} (\lambda\otimes I) = \lambda \otimes (I-\Lambda_R(x))^{-1}.
\]
 Finally, since $\sW$ maps into $[\lambda]\otimes \C^{2d}$ and $\sW\gamma =\lambda\otimes  \iota M\gamma$, 
\[
 W(x):=(I-\Lambda_{\tR}(x))^{-1}\sW  = \lambda\otimes (I-\Lambda_R(x))^{-1} \iota M.
\]
 
 Since also $[\lambda]\otimes \C^{2d}$ is invariant for $L_{\IHA}(x)$, 
\[
 L_{\IHA}(x) W(x) = \lambda \otimes L_A(x) (I-\Lambda_R(x))^{-1} \iota M.
\]
 Returning to equation \eqref{eq:possS} and using $\lambda^*\lambda =1$,
\begin{equation}
 \label{eq:poss one term}
\begin{split}
 L_{{B}}(p(x)) & =  (\lambda^* \otimes (\iota M)^* (I-\Lambda_R(x))^{-*} (\lambda\otimes L_A(x) (I-\Lambda_R(x))^{-1} \iota M \\
 & = M^* \iota^* (I-\Lambda_R(x))^{-*} L_A(x) (I-\Lambda_R(x))^{-1} \iota M.
\end{split}
\end{equation}
 Comparing the coefficients of the $x_j$ terms in equation \eqref{eq:poss one term} gives
\[
 B = M^* \iota^* C A \iota M.
\]
 Since $M:\C^d\to \rg(A^*)$ is unitary, 
\begin{equation}
\label{eq:iM}
 \iota M= \begin{pmatrix} 0 \\ U \end{pmatrix} : \C^d\to \C^{2d}=\C^d\oplus \C^d = \rg(A)\oplus \rg(A^*)
\end{equation}
 for a unitary mapping $U:\C^d\to\C^d$.  Thus, 
\[
 B = U^* Z E U.
\]
 
 Since
\[
  R= (C-I)A = \begin{pmatrix} 0& -E \\ 0 & ZE \end{pmatrix}.
\]
 it follows that
\[
 F(x):=(I-\Lambda_R(x))^{-1} = \begin{pmatrix} I & - \Lambda_E(x) (I-\Lambda_{ZE}(x))^{-1} \\ 0& (I-\Lambda_{ZE}(x))^{-1} \end{pmatrix}.
\]
 Consequently, 
{\footnotesize
\begin{multline*}
 F(x)^* L_A(x) F(x) \\
  =  \begin{pmatrix} I&0\\ -(I-\Lambda_{ZE}(x))^{-*} \Lambda_E(x)^* & (I-\Lambda_{ZE}(x))^{-*}\end{pmatrix} \,
  \begin{pmatrix} I & \Lambda_E(x) \\ \Lambda_E(x)^* & I \end{pmatrix}\, 
  \begin{pmatrix} I & - \Lambda_E(x)(I-\Lambda_{ZE}(x))^{-1} \\ 0& (I-\Lambda_{ZE}(x))^{-1} \end{pmatrix} \\
 =  \begin{pmatrix} I &0 \\ 0 & (I-\Lambda_{ZE}(x))^{-*} (I-\Lambda_E(x)^* \Lambda_E(x))(I-\Lambda_{ZE}(x))^{-1} 
    \end{pmatrix}.
\end{multline*}
}
 Hence, from equations \eqref{eq:poss one term} and \eqref{eq:iM},
\[
  L_B(p(x))= U^* (I-\Lambda_{ZE}(x))^{-*} (I-\Lambda_E(x)^* \Lambda_E(x))(I-\Lambda_{ZE}(x))^{-1} U.
\]
  Further, letting
 \begin{equation*}
  \widetilde{B} = CA =\begin{pmatrix} 0 & 0 \\ 0 & ZE \end{pmatrix},
 \end{equation*}
we have
\[
 L_{\widetilde{B}}(p(x)) = \cW^* F(x)^* L_A(x) F(x) \cW
\]
 where 
\[
 \cW =\begin{pmatrix} I & 0\\0 & U\end{pmatrix}.
\]
  By \cite[Theorem 6.7]{AHKM}, $p$ is a convexotonic mapping determined by the (uniquely determined) convexotonic tuple 
  $\Xi$ satisfying
\[
 A_k (C-I)A_j = \sum_{s=1}^g (\Xi_j)_{k,s} A_s.
\]
 Equivalently, 
\begin{equation}
 \label{eq:secretalg}
 E_kZE_j = \sum_{s=1}^g (\Xi_j)_{k,s} E_s.
\end{equation}

Finally to prove item \eqref{it:supersecret}, multiply equation \eqref{eq:secretalg} by $Z$ on the left and
 use $B=U^*ZEU$ to obtain,
\[
 B_k B_j = \sum_{s=1}^g (\Xi_j)_{k,s} B_s.
\]

\section{Examples}\label{sec:prebasic}
  In this section we take up some examples that motivate Theorem \ref{thm:main} and 
Corollary \ref{cor:obvious}.  First we show that a spectraball, as a member of the 
class of free spectrahedra, is never sv-generic.

\begin{lemma}
\label{l:ballnotsv}
  Suppose $B\in M_d(\C)^g$.  
\begin{enumerate}[(a)]
\item \label{i:notsv1}
     If  $B$ is sv-generic, then $\ker(B):=\cap_{j=1}^g \ker(B_j)= \{0\}.$
\item \label{i:notsv2}
    If $B$ is nilpotent, then $\ker(B)\ne \{0\}$.
\item \label{i:notsv3}
  If $B$ is nilpotent, then $\cD=\cD_B$ is not sv-generic.
\item \label{i:notsv4}
    If $\cD$ is a spectraball, then $\cD$ is not sv-generic.
\end{enumerate}
\end{lemma}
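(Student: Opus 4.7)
For (a), I would argue by orthogonality of eigenspaces. If $v\in\ker(B)$, then $\Lambda_B(\alpha^j)v=\sum_k\alpha^j_kB_kv=0$ for each sv-generic witness $\alpha^j$, so $v$ lies in the $0$-eigenspace of the Hermitian operator $\Lambda_B(\alpha^j)^*\Lambda_B(\alpha^j)$. Since the $1$-eigenspace of this contractive operator is $\spann(u^j)$, orthogonality of distinct eigenspaces forces $v\perp u^j$ for every $j=1,\dots,d+1$. The hyperbasis condition supplies $d$ of the $u^j$ forming a basis of $\C^d$, whence $v=0$.

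For (b), I would prove the contrapositive. Suppose $\ker(B)=\{0\}$, so every nonzero vector $w$ satisfies $B_jw\neq 0$ for some $j$. Starting from any nonzero $v_0$, choose $j_1$ with $v_1=B_{j_1}v_0\neq 0$, then $j_2$ with $v_2=B_{j_2}v_1\neq 0$, and so on. For every $N$ the word $B_{j_N}\cdots B_{j_1}$ acts nontrivially on $v_0$, contradicting nilpotency of $B$ (which forces all sufficiently long words in $B_1,\dots,B_g$ to vanish).

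Part (c) is the main obstacle and is where (a) and (b) are combined. Suppose for contradiction that $\cD_B$ is sv-generic, witnessed by some sv-generic tuple $A$ with $\cD_A=\cD_B$. I would invoke the uniqueness, up to unitary equivalence, of the minimal defining tuple of a free spectrahedron (cf.~\cite{HKM}) to produce a common minimal tuple $C$ with $\cD_C=\cD_A=\cD_B$, together with reducing subspaces $V_A,V_B$ and unitaries identifying $C$ with $A|_{V_A}$ and with $B|_{V_B}$. Since $B$ is nilpotent, so is any restriction to an invariant subspace, hence $C$ is nilpotent, so $\ker(C)\neq\{0\}$ by (b). Conversely, $\ker(A|_{V_A})=V_A\cap\ker(A)=\{0\}$ by (a), yielding $\ker(C)=\{0\}$, a contradiction. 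The delicate point is calibrating the correct form of the minimality/uniqueness statement in the generality needed, particularly since $\cD_B$ is not assumed bounded a priori.

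Finally, (d) is immediate from (c): every spectraball satisfies $\cB_E=\cD_F$ where $F=\left(\begin{smallmatrix}0&E\\0&0\end{smallmatrix}\right)$ is nilpotent (indeed $F^2=0$), so (c) applies directly.
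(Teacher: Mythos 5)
Your proposal is correct and follows essentially the same route as the paper. For (a) you use orthogonality of eigenspaces where the paper argues via $\rg(B^*)=\ker(B)^\perp$, but both reduce to showing each $u^j\perp\ker(B)$; (b) is the paper's argument in contrapositive form; (c) uses the same key structural fact (the minimal defining tuple is a direct summand of any defining tuple, up to unitary equivalence) recast as a proof by contradiction; and (d) is identical. Regarding the concern you flag about calibrating the minimality statement: the reference you want is \cite[Proposition 2.2]{EHKM17}, which gives exactly that any tuple $C$ with $\cD_C=\cD_B$ is unitarily equivalent to $M\oplus J$ for $M$ a minimal defining tuple and which does not presuppose boundedness of $\cD_B$, so your plan goes through as written.
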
 

\begin{remark}\rm
 \label{r:ballnotsv}
 In fact item \eqref{i:notsv1}, and thus items \eqref{i:notsv3} and
 \eqref{i:notsv4},  remain true with sv-generic replaced by eig-generic
 \cite[Definition 7.3]{AHKM}.
\end{remark}

\begin{proof}
 If $\alpha \in \C^g$, $u\in \C^d$ and $[I-\Lambda_B(\alpha)^* \Lambda_B(\alpha)]u=0$, then
 $u\in \rg(B^*)=\ker(B)^\bot$. Hence, if $B$ is sv-generic, then there exists a basis
 $\{u^1,\dots,u^d\}$ of $\C^d$ such that each $u^j \in \rg(B^*)$. Thus 
 $\C^d = \rg(B^*) = \ker(B)^\bot$  and therefore $\ker(B)=\{0\}$.

 Now suppose $B$ is nilpotent.  Thus there is an $N$ 
 such that if $\beta$ is a word whose length exceeds $N$, then
  $B^\beta=0$. Hence there is a word $\alpha$ (potentially empty)
 such that $B^\alpha \ne 0$, but $B_j B^\alpha =0$ for $1\le j\le g$. 
 It follows that $\{0\}\ne \rg(B^\alpha) \subset \ker(B)$, proving
 item \eqref{i:notsv2}.

 To prove item \eqref{i:notsv3}, suppose $B$ is nilpotent and  let
 $\cD=\cD_B.$   Let $M\in M_m(\C)^g$ be a \df{minimal defining tuple} for
 $\cD$, meaning $\cD=\cD_M$ and if $C\in M_s(\C)^g$ and $\cD=\cD_C$, then $s\ge m$.
 By \cite[Proposition 2.2]{EHKM17}, there is a tuple $J$ such that $B$ is unitarily
 equivalent to $M\oplus J$. Since $B$ is nilpotent, so is $M$.  Hence
 $\ker(M)\ne \{0\}$ by item \eqref{i:notsv2}.  Now
 suppose $C$ is any other tuple so that $\cD_C=\cD_B$. 
 Another application of \cite[Proposition 2.2]{EHKM17} gives
 a tuple $N$ such that $C$ is unitarily equivalent 
 to  $M\oplus N$. Hence $\ker(C)\ne \{0\}$ and by item \eqref{i:notsv1},
 $C$ is not sv-generic. Thus $\cD=\cD_B$ is not sv-generic.

 Finally suppose $\cD$ is a spectraball.  Hence there is a
 positive integer $e$ and tuple $E\in M_e(\C)^g$ 
 such that $\cD=\cB_E$.  Since $\cD=\cB_E = \cD_A$, where 
\[
 A =\begin{pmatrix} 0&E\\ 0 & 0 \end{pmatrix} \in M_{2e},
\]
 and $A$ is nilpotent, item \eqref{i:notsv3} implies $\cD$ is not sv-generic.
\end{proof}

\subsection{A spectrahedron defined by a nilpotent tuple}
\label{sec:basic}
A spectrahedron defined by a nilpotent tuple cannot have 
sv-generic coefficients according to Lemma \ref{l:ballnotsv}, 
but we give an example here of how one can overcome this problem
by mapping to a spectraball.

 Let
\begin{equation}
\label{eq:E}
  E_1=I_2  \qquad \text{and} \qquad
  E_2=\begin{pmatrix} 0&1\\0&0\end{pmatrix} 
\end{equation}
and let $F$ denote the $2$-tuple of $3\times 3$ matrices
given in equation \eqref{eq:F}.
Note that $(1,1)\in \C^2$ is in $\cD_F$, but $-(1,1)\notin\cD_F$. Thus $\cD_F$
 is not rotationally invariant and hence not a spectraball. Hence 
  Theorem \ref{thm:main} can not be applied
 to bianalytic mappings $\varphi:\cD_F\to \cD_B$.
 Since  $F$ is nilpotent,  $\cD_F$ is not sv-generic (Lemma \ref{l:ballnotsv})
 and therefore Theorem  \cite[Theorem 1.8]{AHKM} can not be applied
 to bianalytic mappings $\varphi:\cD_F\to \cD_B$ (even assuming $B$ is sv-generic).
 On the other hand,
 $F$ does span an algebra and thus Proposition \ref{prop:properobvious} applies.  
  A straightforward calculation shows that the origin-preserving birational map
    $q:\cD_F\to \cB_F$ of Proposition \ref{prop:properobvious} 
 is given by $q(x_1,x_2) = (x_1, x_2 + x_1^2)$. 
Evidently $\cB_F=\cB_E$.  
The following proposition summarizes the discussion above.

\begin{prop}
 \label{prop:model}
 The mapping 
\[
 q(x_1,x_2) = (x_1, x_2+x_1^2)
\]
  is a bianalytic map from $\cD_F$ onto $\cB_E$. Further, $E$ is sv-generic,
 but $\cD_F$ is neither a spectraball nor sv-generic.
\end{prop}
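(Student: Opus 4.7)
The plan is to verify the three assertions of the proposition in turn. Bianalyticity of $q$ follows by applying Proposition~\ref{prop:properobvious} to $J=F$ and then identifying $\cB_F$ with $\cB_E$; the sv-genericity of $E$ requires an explicit construction of witness vectors, which is the main technical step; the two failures for $\cD_F$ follow from Lemma~\ref{l:ballnotsv} and a straightforward symmetry observation.

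For the bianalyticity, I would first check the hypotheses of Proposition~\ref{prop:properobvious}: the Jordan structure of $F_1$ shows $F_1$ and $F_2=F_1^2$ are linearly independent, and $F_1 F_1=F_2$ with all other products among $F_1,F_2$ vanishing (since $F_1^3=0$), so $\{F_1,F_2\}$ spans an algebra. Reading off the convexotonic tuple via \eqref{e:JgetsXi} yields $\Xi_2=0$ and $\Xi_1=\left(\begin{smallmatrix}0 & 1\\0 & 0\end{smallmatrix}\right)$, so $\Lambda_\Xi(x)^2=0$ and $(I+\Lambda_\Xi(x))^{-1}=I-\Lambda_\Xi(x)$; multiplying out then gives the closed form for $q$. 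Boundedness of $\cD_F$ follows by taking principal $2\times 2$ block submatrices of $L_F(X)$ of the form $\left(\begin{smallmatrix}I & X_i\\X_i^* & I\end{smallmatrix}\right)\succeq 0$, which force $\|X_i\|\le 1$. Hence Proposition~\ref{prop:properobvious}(4) applies and produces a bianalytic $q\colon \cD_F\to \cB_F$. The identification $\cB_F=\cB_E$ follows from the block decomposition of $F_j$ in \eqref{eq:F}: a short calculation gives $\Lambda_F(X)^*\Lambda_F(X)=0\oplus \Lambda_E(X)^*\Lambda_E(X)$, so $\|\Lambda_F(X)\|=\|\Lambda_E(X)\|$.

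For sv-genericity of $E$, a direct computation with $\Lambda_E(\alpha)=\left(\begin{smallmatrix}\alpha_1 & \alpha_2\\0 & \alpha_1\end{smallmatrix}\right)$ shows that $I-\Lambda_E(\alpha)^*\Lambda_E(\alpha)\succeq 0$ has exactly a one-dimensional kernel iff $|\alpha_1|<1$ and $|\alpha_2|=1-|\alpha_1|^2$, with the kernel line determined explicitly by $\alpha$. Choosing three distinct real $\alpha_1^{(j)}\in(-1,1)$ and $\alpha_2^{(j)}=1-(\alpha_1^{(j)})^2$ produces three kernel vectors $u^j$ of the form $(\alpha_1^{(j)},1)$ up to scalar; these are pairwise independent by distinctness of the $\alpha_1^{(j)}$, so $\{u^1,u^2,u^3\}$ is a hyperbasis of $\C^2$. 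An analogous computation for $I-\Lambda_E(\beta)\Lambda_E(\beta)^*$, together with appropriate phase choices on $\beta_1^{(k)}$, yields two vectors $\beta^1,\beta^2$ whose kernel lines $v^1,v^2$ form a basis of $\C^2$. The main obstacle is this coordinate-wise verification: it is routine but requires some care. The remaining claims follow quickly: $F$ is nilpotent, so Lemma~\ref{l:ballnotsv}(c) implies $\cD_F$ is not sv-generic; and $\cD_F$ cannot be a spectraball because spectraballs are invariant under $X\mapsto -X$, whereas $(1,1)\in \cD_F$ and $-(1,1)\notin \cD_F$ (the scalar matrix $L_F(1,1)$ is rank one and hence positive semidefinite, while $\det L_F(-1,-1)<0$).
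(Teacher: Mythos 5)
Your approach coincides with the paper's, which obtains the proposition as a summary of the discussion preceding it: apply Proposition~\ref{prop:properobvious} with $J=F$ to produce the convexotonic map $q\colon\cD_F\to\cB_F$, observe $\cB_F=\cB_E$, invoke the nilpotence of $F$ together with Lemma~\ref{l:ballnotsv} to rule out sv-genericity of $\cD_F$, and use the asymmetry $(1,1)\in\cD_F$, $-(1,1)\notin\cD_F$ (spectraballs being invariant under $X\mapsto -X$) to show $\cD_F$ is not a spectraball. You are, if anything, more complete than the paper: the paper's discussion does not check that $\cD_F$ is bounded (which is what item~(4) of Proposition~\ref{prop:properobvious} requires to get a bianalytic map of the closed sets), declares $\cB_F=\cB_E$ to be ``evident'' with no calculation, and never verifies that $E$ is sv-generic. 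Your block computation $\Lambda_F(X)^*\Lambda_F(X)=0\oplus\Lambda_E(X)^*\Lambda_E(X)$ and your explicit construction of the hyperbasis $\{(a_j,1)\}_{j=1}^3$ and the basis $\{(1,b_k)\}_{k=1}^2$ from kernel vectors of $I-\Lambda_E(\alpha)^*\Lambda_E(\alpha)$ and $I-\Lambda_E(\beta)\Lambda_E(\beta)^*$ are all correct (the ``phase choices'' you mention are not actually needed; real parameters already suffice).

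There is, however, a sign discrepancy that you should have caught. With $\Xi_1=\left(\begin{smallmatrix}0&1\\0&0\end{smallmatrix}\right)$, $\Xi_2=0$, one has $(I+\Lambda_\Xi(x))^{-1}=\left(\begin{smallmatrix}1&-x_1\\0&1\end{smallmatrix}\right)$, so $q(x)=x(I+\Lambda_\Xi(x))^{-1}=(x_1,\,x_2-x_1^2)$, not $(x_1,\,x_2+x_1^2)$; equivalently, $(I+\Lambda_F(X))^{-1}\Lambda_F(X)=\Lambda_F\bigl(X_1,\,X_2-X_1^2\bigr)$. The formula $(x_1,x_2+x_1^2)$ is the \emph{inverse} map $p(x)=x(I-\Lambda_\Xi(x))^{-1}\colon\cB_E\to\cD_F$. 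A direct check confirms the sign: $(1,1)\in\cD_F$ since $L_F(1,1)$ is the rank-one all-ones matrix, yet $(x_1,x_2+x_1^2)$ sends $(1,1)$ to $(1,2)$, and $\|\Lambda_E(1,2)\|=1+\sqrt{2}>1$, so $(1,2)\notin\cB_E$. This sign slip is present in the paper's discussion as well as in the proposition statement, but your write-up, by asserting that ``multiplying out then gives the closed form for $q$'' without displaying the result, silently endorses the erroneous sign; carrying out the multiplication would have revealed the mismatch with the stated formula.
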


According to Proposition \ref{prop:model},
 to classify bianalytic maps 
 $f:\cD_F\to \cD_B$ it suffices to   determine 
 the bianalytic maps $h:\cB_E\to \cD_B.$  
Such maps are the subject of 
the next subsection.

\subsection{Bianalytic mappings of $\cB_E$ to a free spectrahedron $\cD_B$}

Theorem \ref{thm:main} applies in the case that $B$ is sv-generic or has size $2$.

\begin{prop}
\label{prop:BZZZZ}
 Suppose $B\in M_e(\C)^2$ and either $e=2$ or $B$ is sv-generic. If $f:\cB_E\to\cD_B$ is bianalytic, then $e=2$ and there is a unimodular $\alpha$ and 
$2\times 2$ unitary $M$ such that $B= \alpha M^*EM$ and further $f$ is the
birational map
\[
f(x)
=  \begin{pmatrix} x_1(1-\alpha x_1)^{-1} & (1-\alpha x_1)^{-1} x_2 (1-\alpha x_1)^{-1} 
  \end{pmatrix}.
\]
\end{prop}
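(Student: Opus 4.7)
The plan is to apply Theorem~\ref{thm:main} and then use the very special structure of $E = (I, E_2)$ (with $E_2 = \left(\begin{smallmatrix} 0 & 1 \\ 0 & 0 \end{smallmatrix}\right)$) to determine all remaining parameters. After normalizing so that $f(0) = 0$ and $f^\prime(0) = I$, I would check the hypotheses of Theorem~\ref{thm:main}: $E$ is sv-generic by Proposition~\ref{prop:model} and linearly independent. In the case $B$ is assumed sv-generic, boundedness of $\cD_B$ comes from the bianalytic correspondence with the bounded $\cB_E$, and pseudoconvex extension is standard. In the alternative $e = 2$ case, sv-genericity of $B$ has to be deduced separately; a short argument using boundedness of $\cD_B$ and the $2 \times 2$ size ought to suffice.

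With Theorem~\ref{thm:main} in force, we obtain $2 \times 2$ unitaries $Z$, $M$ and a convexotonic tuple $\Xi \in M_2(\C)^2$ with $B = M^* Z E M$, the algebra relation
\[
 E_k Z E_j = \sum_{s=1}^2 (\Xi_j)_{k,s} E_s \qquad (k, j \in \{1, 2\}),
\]
and $p(x) = x(I - \Lambda_\Xi(x))^{-1}$; the dimension count inside the proof of Theorem~\ref{thm:main} yields $e = d = 2$, confirming the first conclusion. The crucial algebraic observation is that $E_1 = I$. Setting $k = j = 1$ in the algebra relation shows $Z$ lies in $\spann\{I, E_2\}$, so $Z = aI + bE_2$ is upper triangular with equal diagonal entries. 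A one-line check of $Z^* Z = I$ then forces $|a| = 1$ and $b = 0$, giving $Z = \alpha I$ with $|\alpha| = 1$ and hence $B = \alpha M^* E M$. Substituting $Z = \alpha I$ back into the algebra relation for the remaining pairs $(k, j)$ uniquely yields $\Xi_1 = \alpha I$ and $\Xi_2 = \left(\begin{smallmatrix} 0 & \alpha \\ 0 & 0 \end{smallmatrix}\right)$.

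It then remains to evaluate $p(x) = x(I - \Lambda_\Xi(x))^{-1}$ explicitly. Since
\[
I - \Lambda_\Xi(x) = \begin{pmatrix} 1 - \alpha x_1 & -\alpha x_2 \\ 0 & 1 - \alpha x_1 \end{pmatrix}
\]
is upper triangular in the noncommuting variables $x_1, x_2$, its inverse can be read off directly; multiplying on the left by the row $(x_1\ x_2)$ gives $x_1(1 - \alpha x_1)^{-1}$ as the first component, and after the one-line simplification $\alpha x_1 (1 - \alpha x_1)^{-1} + 1 = (1 - \alpha x_1)^{-1}$, the second component collapses to $(1 - \alpha x_1)^{-1} x_2 (1 - \alpha x_1)^{-1}$, exactly as claimed. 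The main obstacle in this proof is not the algebra (which is mechanical once Theorem~\ref{thm:main} applies) but the hypothesis verification in the $e = 2$ case, where sv-genericity of $B$ is not given outright and must be extracted from the bianalytic correspondence with $\cB_E$.
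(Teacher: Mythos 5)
Your proposal follows essentially the same path as the paper's proof: set $k=j=1$ in the algebra relation \eqref{it:secretalg} to see $Z = E_1 Z E_1 \in \spann\{I,E_2\}$, invoke unitarity to force $Z=\alpha I$, deduce $\Xi=\alpha E$, and then read off the explicit convexotonic map from the upper-triangular inverse. The paper's own proof is equally terse on hypothesis verification in the $e=2$ case (it simply says ``the $Z$ in Theorem \ref{thm:main}''), so the caveat you flag there is legitimate but does not constitute a divergence from the published argument.
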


\begin{remark}\rm
 The mapping $f$ is a variant (obtained by the linear change of variable $(x_1,x_2)$ maps to $\alpha(x_1,x_2)$)
   of those appearing in $g=2$ type IV algebra (see \cite[Section 8.3]{AHKM} or Subsubsection \ref{sec:g2t4} below). 
\end{remark}

\begin{proof}[Proof of Proposition~\ref{prop:BZZZZ}]  
 In this case the $Z$ in Theorem \ref{thm:main} is a unimodular multiple
 of the identity. Indeed, by \eqref{eq:secretalg},
 $$ Z = E_1 ZE_1 = 
  (\Xi_1)_{1,1} I  +   (\Xi_1)_{1,2} E_2
 $$
 and since $Z$ is unitary, it follows that $(\Xi_1)_{1,2} =0$ and $Z=\alpha I$. 
 It is now easy to verify that $\Xi=\alpha E$.
 Hence the corresponding convexotonic map is
\[
\begin{split}
 f(x) & =  x(I-\Lambda_\Xi(x))^{-1} \\
 & =  \begin{pmatrix} x_1 & x_2 \end{pmatrix}\,
   \begin{pmatrix} 1-\alpha x_1 & -\alpha x_2 \\ 0 & 1-\alpha x_1 \end{pmatrix}^{-1}\\
& =  \begin{pmatrix} x_1(1-\alpha x_1)^{-1} & (1-\alpha x_1)^{-1} x_2 (1-\alpha x_1)^{-1} 
  \end{pmatrix},
\end{split} 
\]
as desired.
\end{proof}

Composing the $f$ from Proposition \ref{prop:BZZZZ} with the original 
$q=(x_1,x_2+x_1^2)$, the bianalytic map
between $\cD_F$ and $\cB_E$, gives the mapping
 from the original domain $\cD_F$ to $\cD_B$,
\[
  f\circ q = \begin{pmatrix}  x_1(1-\alpha x_1)^{-1} & (1-\alpha x_1)^{-1} \left [x_2 +x_1^2\right ] (1-\alpha x_1)^{-1}  \end{pmatrix}.
\]
By  \cite[Theorem 1.8]{AHKM}, if $G$, $H$ and $K$ are all sv-generic and  $r:\cD_G\to \cD_H$ and $s:\cD_H\to \cD_K$ are bianalytic (and extend to be analytic on pseudoconvex domains
containing $\cD_G$ and $\cD_H$ respectively),
 then $r,s$ and $r\circ s$ are convexotonic. However,
generally one does not expect an arbitrary composition of convexotonic maps to be convexotonic. 
(See \cite[Subsection 8.4]{AHKM}.)    Thus,
it is of interest to note that, even though our $F$ is not sv-generic, the
map $f\circ q$ is convexotonic.

\begin{prop}
 \label{lem:isto}
  The map $f\circ q$ is convexotonic corresponding to the tuple $\Xi = \begin{pmatrix} \alpha I_2 + E_2, \alpha E_2 \end{pmatrix}$.
\end{prop}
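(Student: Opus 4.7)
The plan is to verify the claim by a direct computation that identifies the convexotonic map $x(I - \Lambda_\Xi(x))^{-1}$ with the explicit formula for $f\circ q$ given just before the statement of the proposition. First I would write
\[
\Lambda_\Xi(x) = (\alpha I_2 + E_2) x_1 + \alpha E_2 \, x_2 = \begin{pmatrix} \alpha x_1 & x_1 + \alpha x_2 \\ 0 & \alpha x_1 \end{pmatrix},
\]
so that $I - \Lambda_\Xi(x)$ is $2\times 2$ upper triangular with both diagonal entries equal to $1 - \alpha x_1$. The standard formula for the inverse of a $2\times 2$ upper triangular matrix then yields
\[
(I - \Lambda_\Xi(x))^{-1} = \begin{pmatrix} (1-\alpha x_1)^{-1} & (1-\alpha x_1)^{-1}(x_1 + \alpha x_2)(1-\alpha x_1)^{-1} \\ 0 & (1-\alpha x_1)^{-1} \end{pmatrix}.
\]

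Multiplying by the row $(x_1, x_2)$ on the left, the first component of the convexotonic map evaluates to $x_1(1-\alpha x_1)^{-1}$, matching the first component of $f\circ q$. The second component is
\[
x_1 (1-\alpha x_1)^{-1}(x_1 + \alpha x_2)(1-\alpha x_1)^{-1} + x_2 (1-\alpha x_1)^{-1}.
\]
The key observation is that $x_1$ commutes with every formal power series in $x_1$ alone, so $x_1(1-\alpha x_1)^{-1} = (1-\alpha x_1)^{-1} x_1$ and likewise $x_2(1-\alpha x_1)^{-1} = (1-\alpha x_1)^{-1}(1-\alpha x_1)\,x_2\,(1-\alpha x_1)^{-1}$. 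Factoring $(1-\alpha x_1)^{-1}$ out on both sides, the inner expression telescopes to $x_1^2 + \alpha x_1 x_2 + x_2 - \alpha x_1 x_2 = x_2 + x_1^2$, so the second component equals $(1-\alpha x_1)^{-1}(x_2 + x_1^2)(1-\alpha x_1)^{-1}$, matching $f\circ q$ exactly.

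For completeness I would also record a short verification that $\Xi$ is convexotonic. Using $E_2^2 = 0$, direct multiplication gives $\Xi_1^2 = \alpha^2 I_2 + 2\alpha E_2 = \alpha \Xi_1 + \Xi_2$, $\Xi_1\Xi_2 = \Xi_2\Xi_1 = \alpha^2 E_2 = \alpha \Xi_2$, and $\Xi_2^2 = 0$; these coefficients coincide with the entries of the first and second rows of $\Xi_1$ and $\Xi_2$, which is precisely the convexotonic identity $\Xi_k\Xi_j = \sum_s (\Xi_j)_{k,s}\Xi_s$. There is no serious obstacle: the argument is a bookkeeping calculation in the free algebra, with the only substantive point being the scalar-like commutation $x_1(1-\alpha x_1)^{-1} = (1-\alpha x_1)^{-1} x_1$. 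The mild surprise, as the authors note, is that this particular composition of convexotonic maps turns out to be convexotonic at all, contrary to the general situation.
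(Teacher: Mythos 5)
Your computation is correct and follows exactly the same route as the paper: write out $I-\Lambda_\Xi(x)$ as an upper-triangular $2\times 2$ matrix of linear pencils, invert, multiply by the row $(x_1,x_2)$, and simplify the second component using the commutation $x_1(1-\alpha x_1)^{-1}=(1-\alpha x_1)^{-1}x_1$. The only addition beyond what the paper records is your explicit check that $\Xi$ satisfies the convexotonic identity, which is a welcome bit of completeness but not a departure in method.
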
  

\begin{proof}
 Here is an outline of the computation that proves the proposition.
\[
 \begin{split}
   x(I&-\Lambda_\Xi(x))^{-1}= x \begin{pmatrix} 1-\alpha x_1 & -(x_1+\alpha x_2) \\ 0 & 1-\alpha x_1\end{pmatrix}^{-1} \\
  & =  \begin{pmatrix} x_1 & x_2 \end{pmatrix}\, 
     \begin{pmatrix} (1-\alpha x_1)^{-1} & (1-\alpha x_1)^{-1}(x_1+\alpha x_2)(1-\alpha x_1)^{-1} \\ 0 & (1-\alpha x_1)^{-1}\end{pmatrix} \\
  &= \begin{pmatrix}(x_1 (1-\alpha x_1)^{-1} &   & x_1 (1-\alpha x_1)^{-1}(x_1+\alpha x_2) (1-\alpha x_1)^{-1} +x_2 (1-\alpha x_1)^{-1} \end{pmatrix}.
 \end{split}
\]
 Analyzing the second entry above gives
\[
\begin{split}
x_1 (1&-\alpha x_1)^{-1} (x_1+\alpha x_2) (1-\alpha x_1)^{-1} +x_2 (1-\alpha x_1)^{-1} \\
 &= (1-\alpha x_1)^{-1} [ x_1^2 + \alpha x_1 x_2 + (1-\alpha x_1)x_2 ] (1-\alpha x_1)^{-1}\\
 &=  (1-\alpha x_1)^{-1} [x_2 + x_1^2] (1-\alpha x_1)^{-1},
\end{split}
\]
 as desired.
\end{proof}

\subsection{Two dimensional algebras with $g=2$}
In this section we consider, in view of Corollary \ref{cor:obvious}, the four indecomposable algebras $\cA$ of dimension two.
In each case we choose a tuple $\cR=(\cR_1,\cR_2)$  and compute the resulting convexotonic map $G:\cD_{\cR}\to \cB_{\cR}$.
We adopt the names for these algebras used in \cite{AHKM}.

\subsubsection{$g=2$ type $I$ algebra}
Let $\cR=F$, where $F$ is given by 
 \eqref{eq:F}. In this case we already saw $q(x_1,x_2)= (x_1, x_2+x_1^2)$.
In this case $\cD_F$ and $\cB_F$ are both bounded. 
While the tuple $F$ is not sv-generic, the tuple $E$ of equation \eqref{eq:E} is and moreover $\cB_F=\cB_E$.
Hence Theorem \ref{thm:main} does indeed apply (by replacing $F$ by $E$).

\subsubsection{$g=2$ type $II$ algebra}
Let 
\[
 \cR_1 =\begin{pmatrix} 1 &0\\0&0\end{pmatrix}, \ \ \ \cR_2 =\begin{pmatrix} 0&1\\0&0\end{pmatrix}.
\]
We have
\[
 (I+\Lambda_{\cR}(x))^{-1}\Lambda_\cR(x)
  = \begin{pmatrix} (1+x_1)^{-1} x_1 & (1+x_1)^{-1} x_2 \\ 0&0\end{pmatrix}.
\]

Hence  $q=((1+x_1)^{-1} x_1 \;\;\;  (1+x_1)^{-1} x_2)$ is a birational map from $\interior(\cD_{\cR})$  to the  spectraball $\interior(\cB_{\cR})$
that also maps the boundary of $\cD_{\cR}$ into the boundary of $\cB_{\cR}$. 
On the other hand, if $X_1$ is skew selfadjoint, then $(X_1,0)\in \cD_{\cR}$, so that $\cD_{\cR}$ is not bounded and, for instance, the tuple
\[
 \left (\begin{pmatrix} 0 & -1\\ 1 & 0 \end{pmatrix}, \begin{pmatrix} 0 &0\\0&0 \end{pmatrix} 
 \right )
\]
is in $\cB_{\cR}$ but not the range of $q$.
 In this example,  $\cR$ has a (common nontrivial) cokernel and is thus not sv-generic.
 Hence Theorem \ref{thm:main} does not apply.

\subsubsection{$g=2$ type $III$ algebra}
This case, in which 
\[
 \cR_1 =\begin{pmatrix} 1&0\\0&0\end{pmatrix}, \ \ \  \cR_2 = \begin{pmatrix} 0&0\\1&0\end{pmatrix},
\]
is very similar to the $g=2$ type $II$ case.

 \subsubsection{$g=2$ type $IV$ algebra}
\label{sec:g2t4}
Let $\cR=E$, where $E$ is defined in equation \eqref{eq:E}, and observe
\[
 (I+\Lambda_\cR(x))^{-1}\, \Lambda_\cR(x) 
 =  \begin{pmatrix} (1+x_1)^{-1} x_1 &  (1+x_1)^{-1} x_2(1+x_1)^{-1} \\ 0 & (1+x_1)^{-1} x_1 \end{pmatrix}.
\]
In this case,
\[
 q(x) = \begin{pmatrix} x_1(1+ x_1)^{-1} & (1+ x_1)^{-1} x_2 (1+ x_1)^{-1} \end{pmatrix}
\]
is bianalytic from $\interior(\cD_{\cR})$ to $\interior(\cB_E)$ and maps boundary into boundary, but
does not map boundary onto boundary.  In this case $\cB_{\cR}$ is bounded and sv-generic and hence
Theorem \ref{thm:main} does apply (with appropriate assumptions on $\cD_B$ and $p:\cD_{\cR}\to \cD_B$).


\begin{thebibliography}{99}
\footnotesize

 \bibitem[AHKM18]{AHKM} 
 M. Augat, J.W. Helton, I. Klep, S. McCullough:
\textit{Bianalytic Maps Between Free Spectrahedra},
Math. Ann. 371 (2018) 883--959.


\bibitem[AM14]{AM14}
J. Agler, J. McCarthy:
\textit{Global holomorphic functions in several non-commuting variables}, Canad. J. Math. 67 (2015) 241--285.


\bibitem[BGM06a]{BGM06a}
J.A. Ball, G. Groenewald, T. Malakorn:
 \textit{Bounded Real Lemma for Structured Non-Commutative
  Multidimensional Linear Systems and Robust Control},
Multidimens. Syst. Signal Process. {17} (2006) 119--150.

\bibitem[BGM06b]{BGM06b} 
J.A. Ball, G. Groenewald, T. Malakorn:
Conservative structured noncommutative multidimensional linear systems. The state space method generalizations and applications, 179--223, Oper. Theory Adv. Appl., 161, Linear Oper. Linear Syst., Birkhäuser, Basel, 2006.

\bibitem[BGM05]{BGM05}
J.A. Ball, G. Groenewald, T. Malakorn:
 Structured noncommutative multidimensional linear systems. SIAM J. Control Optim. 44 (2005), no. 4, 1474--1528.

\bibitem[BMV18]{BMV}
J.A. Ball, G. Marx, V. Vinnikov:
\textit{Interpolation and transfer-function realization for the noncommutative Schur-Agler class},
 In: Duduchava R., Kaashoek M., Vasilevski N., Vinnikov V. (eds)
 Operator Theory in Different Settings and Related Applications,
 Operator Theory: Advances and Applications, 23--116, vol 262. Birkh\"auser, Cham, 2018.



\bibitem[B-TN02]{BN02}
A.  Ben-Tal,  A.  Nemirovski:  \textit{On  tractable  approximations  of  uncertain  linear  matrix  inequalities
affected by interval uncertainty}, SIAM J. Optim. 12 (2002) 811--833.

\bibitem[BPR13]{BPR13}
G.  Blekherman, P.A. Parrilo, R.R. Thomas (editors):
{\it Semidefinite optimization and convex algebraic geometry},
MOS-SIAM Series on Optimization {13}, SIAM, 2013.

\bibitem[BGFB94]{BGFB94}
S. Boyd, L. El Ghaoui, E. Feron, V. Balakrishnan:
{\it Linear Matrix Inequalities in System and Control Theory},
SIAM Studies in Applied Mathematics {15}, SIAM, 1994.


  \bibitem[Br\"a11]{Bra11}
  P. Br\"and\'en: 
  \textit{Obstructions to determinantal representability},
  Adv. Math. 226 (2011) 1202--1212.
  

\bibitem[dOHMP09]{dOHMP09}
M. de Oliveira, J.W. Helton, S. McCullough, M. Putinar: \textit{Engineering systems and free semi-algebraic
geometry}, in: Emerging applications of algebraic geometry (edited by M. Putinar, S. Sullivant),  17--61, Springer-Verlag, 2009.

\bibitem[EW97]{EW}
E.G. Effros, S. Winkler:
\textit{Matrix convexity: operator analogues of the bipolar and Hahn-Banach theorems},
J. Funct. Anal. {144}  (1997)  117--152.
  

  \bibitem[EHKM17]{EHKM17}
  E. Evert, J.W. Helton, I. Klep, S. McCullough:
\textit{Circular Free Spectrahedra}, J. Math. Anal. Appl. 445 (2017) 1047--1070.

  
 \bibitem[For93]{For}
 F. Forstneri\v c:
 \textit{Proper holomorphic mappings: a survey}, in: 
 Several complex variables (Stockholm, 1987/1988) 297--363, Math. Notes {38}, Princeton Univ. Press,   1993.


\bibitem[GK-VVW16]{GK-VVW}
A. Grinshpan, D.S. Kaliuzhnyi-Verbovetskyi, V. Vinnikov, H.J. Woerdeman:
\textit{Matrix-valued Hermitian Positivstellensatz, lurking contractions, and contractive determinantal representations of stable polynomials},
in Oper. Theory Adv. Appl 255, 
 123--136, Birkh\"auser/Springer,  2016.

\bibitem[HKMS09]{HKMS09}
 J.W. Helton, I. Klep, S. McCullough, N. Slinglend:
\textit{ Noncommutative ball maps}, 
 J. Funct. Anal. 257 (2009), no. 1, 47--87. 

\bibitem[HKM11a]{HKM11a}
  J.W. Helton, I. Klep, S. McCullough:
\textit{Analytic mappings between noncommutative pencil balls,}
  J. Math. Anal. Appl. 376 (2011), no. 2, 407--428. 



\bibitem[HKM11b] {HKM11b}
J.W. Helton, I. Klep, S. McCullough:
\textit{Proper Analytic Free Maps}, J. Funct. Anal. {260} (2011)  1476--1490. 

\bibitem[HKM12a]{HKM12} J.W. Helton, I. Klep, S. McCullough:
\textit{The convex Positivstellensatz in a free algebra},
Adv. Math. {231} (2012) 516--534.
(this article succeeds \cite{HKM} but appeared earlier)
 

\bibitem[HKM13]{HKM} J.W. Helton, I. Klep, S. McCullough:
\textit{The matricial relaxation of a linear matrix inequality},
Math. Program. {138} (2013) 401--445.
(this article precedes \cite{HKM12} but appeared later) 

\bibitem[HKM17]{HKMjems} J.W. Helton, I. Klep, S. McCullough:
\textit{The tracial Hahn-Banach theorem, polar duals, matrix convex sets, and projections of free spectrahedra},
 {J. Eur. Math. Soc.}
 19 (2017) 1845--1897.

\bibitem[HKMS+]{HKMS}
J.W. Helton, I. Klep, S. McCullough, M. Schweighofer:
\textit{Dilations, Linear Matrix Inequalities, the Matrix Cube Problem and Beta Distributions}, 
to appear in Mem. Amer. Math. Soc., \url{https://arxiv.org/abs/1412.1481}



 

\bibitem[HV07]{HV07}
 J.W. Helton, V. Vinnikov: 
\textit{Linear matrix inequality representation of sets}, {Comm. Pure Appl. Math.}
{60} (2007) 654--674.

 

\bibitem[KVV09]{KVV09}
D. Kaliuzhnyi-Verbovetskyi, V. Vinnikov:
 \textit{Singularities of rational functions and minimal factorizations: the noncommutative and the commutative setting}, {Linear Algebra Appl.} 430 (2009) 869--889.

\bibitem[KVV14]{KVV14}
D. Kaliuzhnyi-Verbovetskyi, V. Vinnikov:
{\it Foundations of Free Noncommutative Function Theory,}
Mathematical Surveys and Monographs {199}, AMS, 
2014.
 
 \bibitem[K\v S17]{KS}
 I. Klep, \v S. \v Spenko: 
 \textit{Free function theory through matrix invariants}, {Canad. J.
Math.}
 69 (2017)  408--433.

  
\bibitem[MSS15]{MSS15}
A.W. Marcus, D.A. Spielman, N. Srivastava:
 \textit{Interlacing families II: Mixed characteristic polynomials and the Kadison--Singer problem},
 {Ann. of Math. (2)} {182} (2015) 327--350.

\bibitem[MS08]{MS}
P.S. Muhly, B. Solel:
\textit{Schur class functions and automorphism of Hardy
algebras}, Doc. Math. 13 (2008) 365--411.

\bibitem[NT12]{NT12}
T. Netzer, A. Thom:
\textit{Polynomials with and without determinantal representations},
{Linear Algebra Appl.} 437  (2012) 1579--1595.
 

\bibitem[Pau02]{Pau}
V. Paulsen:
{\it Completely bounded maps and operator algebras},
Cambridge Univ. Press, 2002.


 

\bibitem[Pop10]{Po2} G. Popescu:
\textit{Free holomorphic automorphisms of the unit ball of $B(H)^n$},
    {J. reine angew. Math.} {638} (2010) 119--168.


\bibitem[SIG96]{Skelton}
 R.E. Skelton, T. Iwasaki, K.M. Grigoriadis:
{\it A Unified Algebraic Approach to Linear Control Design,} Taylor and Francis,  1996.  
 

 

\bibitem[Vin93]{Vin93}
V. Vinnikov: 
\textit{Self-adjoint determinantal representations of real plane curves},
{Math. Ann.} 296 (1993) 453--479.

\bibitem[Voi04]{Voi04} 
D.-V. Voiculescu:
\textit{Free analysis questions I: Duality transform for the
coalgebra of $\partial_{X:B}$}, {Int. Math. Res. Not.} {16}
(2004) 793--822.


\bibitem[Vol17]{Vol17}
J. Vol\v ci\v c: \textit{On domains of noncommutative rational functions}, Linear Algebr Appl. 516 (2017) 69--81.


\end{thebibliography}
\end{document}